
\documentclass[11pt]{article}
\usepackage{amsthm,amsmath,amssymb,amsfonts,amsopn,color,graphics,graphicx}
\usepackage{enumerate}
\usepackage{color}


\setlength{\unitlength}{0.1cm} \setlength{\topmargin}{-30pt}
\setlength{\textheight}{22cm} \setlength{\textwidth}{16cm}
\setlength{\oddsidemargin}{0.1in}
\setlength{\evensidemargin}{0.1in}


\newcommand{\RN}{\mathbb{R}^N}

\newcommand{\gam}{\gamma}
\newcommand{\vphi}{\varphi}
\newcommand{\into}{\int_{\Omega}}
\newcommand{\intr}{\int_{\mathbb{R}^N}}
\newcommand{\hmu}{\overline{\mu}}
\newcommand{\ep}{\epsilon}

\newtheorem{thm}{Theorem}[section]
\newtheorem{df}{Definition}[section]
\newtheorem{prop}[thm]{Proposition}
\newtheorem{cor}[thm]{Corollary}
\newtheorem{lem}[thm]{Lemma}
\newtheorem{rem}{Remark}[section]


\begin{document}

\title{Stable solutions and finite Morse index solutions of
nonlinear elliptic equations with Hardy potential}

\author{Wonjeong Jeong$^1$ and Youngae Lee$^2$ \\
$^1$ Department of Mathematics, POSTECH, \\
Pohang, Kyungbuk 790-784, Republic of Korea \\
$^2$ Center for Advanced Study in Theoretical Science, National Taiwan University, \\
No.1, Sec. 4, Roosevelt Road, Taipei 106, Taiwan\\
E-mail: $^1$thewonj@postech.ac.kr, $^2$youngaelee0531@gmail.com}
\date{}
\maketitle

\begin{abstract}
We are concerned with Liouville-type results of
stable solutions and finite Morse index solutions
for the following nonlinear elliptic equation with Hardy potential:
\begin{displaymath}
\Delta u+\dfrac{\mu}{|x|^2}u+|x|^l |u|^{p-1}u=0 \qquad \textrm{in}\ \ \Omega,
\end{displaymath}
where $\Omega=\RN$, $\RN\setminus\{0\}$ for $N\geq3$,
$p>1$, $l>-2$ and $\mu<(N-2)^2/4$.
Our results depend crucially on a new critical exponent $p=p_c(l,\mu)$ and
the parameter $\mu$ in Hardy term.
We prove that there exist no nontrivial stable solution
and finite Morse index solution for $1<p<p_c(l,\mu)$.
We also  {observe} a range of the exponent $p$ larger than $p_c(l,\mu)$
satisfying that our equation admits a positive radial stable solution.
\end{abstract}

Keywords: Stable solutions; Finite Morse index solutions; Hardy potential.

\maketitle


\section{Introduction}

We consider stable solutions and finite Morse index solutions
of the following nonlinear elliptic equation with Hardy potential:
\begin{equation}\label{H}
\Delta u+\dfrac{\mu}{|x|^2}u+|x|^l |u|^{p-1}u=0 \qquad \textrm{in}\ \ \Omega,
\end{equation}
where $\Omega=\RN$, $\RN\setminus\{0\}$ for $N\geq3$.
Here $p>1$, $l>-2$ and the parameter $\mu$ satisfies the inequality $\mu<\hmu$,
where $\hmu:=(N-2)^2/4$ is the best constant in Hardy's inequality.
The nonlinear elliptic equations with Hardy potential have been studied by many authors
(see \cite{BDT, Du, T} and the references therein).

Our motivation for investigating (\ref{H}) comes from the results of \cite{F1}
in which Farina established Liouville-type theorems for stable solutions and finite Morse index solutions
of \eqref{H} with $\mu=l=0$ and $\Omega=\RN$($N\geq2$),
which is called by the Lane-Emden equation, as follows:
\begin{equation}\label{LE}
-\Delta u=|u|^{p-1}u \qquad \textrm{in}\ \ \RN.
\end{equation}

Recently, there has been some interest in studying stable or finite Morse index solutions
of the following autonomous elliptic equation:
\begin{equation}\label{elliptic eq}
-\Delta u = f(u) \qquad \textrm{in}\ \ \RN.
\end{equation}
Stable radial solutions of (\ref{elliptic eq}) are well-understood
by the works in \cite{CC} and \cite{V}.
It was shown in \cite{CC} that every bounded radial stable solution must be
constant if $N\leq10$ and $f\in C^1$ satisfies a generic
non-degeneracy condition.
However, in case of nonradial solutions, much less is known.
In \cite{DF1, DF2}, the authors presented Liouville-type theorems
for stable solutions and finite Morse index solutions of (\ref{elliptic eq})
with general convex and non-decreasing nonlinearities $f\geq 0$.
In particular, a complete analysis of stable solutions and finite Morse index
solutions (including solutions which are stable outside a compact set)
is provided for two important nonlinearities $f(u)=|u|^{p-1}u$, $p>1$ and
$f(u)=e^u$ in \cite{DF, F1, F2}.
When $f(u)=|u|^{p-1}u$, Farina's results \cite{F1} say that the equation (\ref{LE}) with
\begin{align*}
1<p<p_c:=\left\{ \begin{array}{ll}
+\infty \qquad &\textrm{if}\ \ N\leq 10,\\
\frac{(N-2)^2-4N+8\sqrt{N-1}}{(N-2)(N-10)} \qquad &\textrm{if}\ \ N\geq11
\end{array}\right.
\end{align*}
has no nontrivial stable solution and it admits a positive radial stable solution
if $p\geq p_c$ and $N\geq11$.
It was also proved that (\ref{LE}) has no nontrivial finite Morse index solution
when $1<p<p_c$ and $p\neq(N+2)/(N-2)$.

Furthermore, in a recent paper \cite{DDG}, Dancer, Du and Guo
extended some results in \cite{F1}.
They considered (\ref{H}) with $\mu=0$ as follows:
\begin{equation}\label{DDG}
-\Delta u=|x|^l |u|^{p-1}u \qquad \textrm{in}\ \ \Omega,
\end{equation}
where $p>1$, $l>-2$ and $\Omega$ is a bounded or unbounded domain of $\RN$ for $N\geq 2$.
It was proved that there is no nontrivial stable solution of (\ref{DDG}) in $\RN$ if
\begin{align*}
1<p<p_c(l):=\left\{ \begin{array}{ll}
+\infty \qquad &\textrm{if}\ \ N\leq 10+4l,\\
\frac{(N-2)^2-2(l+2)(N+l)+2(l+2)\sqrt{(N+l)^2-(N-2)^2}}{(N-2)(N-10-4l)} \qquad &\textrm{if}\ \ N>10+4l
\end{array}\right.
\end{align*}
and that for $p\geq p_c(l)$, (\ref{DDG}) admits a positive radial stable solution in $\RN$.
Moreover, for a finite Morse index solution $u$, the authors obtained the behavior of $u$
near the origin when $\Omega$ is a punctured ball $B_R(0)\setminus\{0\}$ and
its behavior near infinity when $\Omega$ is an exterior domain $\RN\setminus B_R(0)$.
In addition, Wang and Ye \cite{WY, WY0} obtained Liouville-type result
for finite Morse index solutions of \eqref{DDG} in $\RN$,
which is a partial extension of results in \cite{DDG}.
On the other hand, Dancer obtained many results related to stable or finite Morse index solutions
of (\ref{H}) (see \cite{D1, D2}).
For other relevant papers, see \cite{BaL, DDF} and the references therein.

In this direction, we expect that further extension and generalization of the above results
can be made.
Hence we consider a more general non-autonomous equation (\ref{H})
in an entire space $\RN$ and a punctured space $\RN\setminus\{0\}$
to extend some of results in \cite{DDG} and \cite{F1} (partially including \cite{WY}).
Throughout this paper, we verify that there is an exponent $p=p_c(l,\mu)$ depending on $N$, $l$ and $\mu$
such that (\ref{H}) has no nontrivial stable solution in $\RN$ if $1<p< p_c(l,\mu)$
and it admits a positive radial stable solution in $\RN$
for certain range of the exponent $p$ larger than $p_c(l,\mu)$.
We also prove that (\ref{H}) has no nontrivial finite Morse index solution in $\RN$ and $\RN \setminus\{0\}$
if $1<p<p_c(l_-,\mu)$ with $l_-=\min\{l,0\}$ and $p\neq(N+2+2l)/(N-2)$.

Our approach to the problem (\ref{H}) is based on ideas of \cite{DDG} and \cite{F1}.
The main point of their arguments consist in obtaining the integral estimate
which is satisfied by stable solutions.
However, because of the presence of Hardy potential in our case,
the difficulty lies in deriving the integral estimate.
To overcome this difficulty,
we use the method that the Hardy term is absorbed into the other parts
by applying Hardy's inequality.
This idea enables us to apply the methods of \cite{DDG} and \cite{F1}.
Another difficulty stems from the fact that the problem (\ref{H}) is singular.
For this reason, we use a more delicate approach to derive improved versions.

At this point, the most recent work \cite{DG1} should be noticed
although it was written after our paper under review.
In \cite{DG1}, Du and Guo studied the behavior of finite Morse index solutions of the equation
\begin{equation}\label{CKN}
-{\rm{div}}(|x|^{\theta}\nabla v)=|x|^{\alpha} |v|^{p-1}v \qquad\textrm{in}\ \ \Omega,
\end{equation}
where $p>1$, $\theta$, $\alpha\in\mathbb{R}$ and $\Omega$ is a bounded or unbounded domain in $\RN$.
Through the transformation $v=|x|^{\sigma}u$ with $\sigma=\sqrt{\hmu}-\sqrt{\hmu-\mu}$,
the equation \eqref{H} can be reduced to \eqref{CKN} with $\theta=-2\sigma$ and $\alpha=l-\sigma(p+1)$, and vice versa.
By using the fact that the stability of solutions to \eqref{H} is unchanged under natural transformations,
the authors treated the equivalent problem \eqref{CKN} to our main equation \eqref{H}
and also applied the methods in \cite{DDG} and \cite{F1} to \eqref{CKN} after introducing a suitable setting.
They found the critical values of the exponent $p$ in \eqref{CKN}
dividing the behavior of finite Morse index solutions of \eqref{CKN}.

The organization of this paper is as follows.
In Section 2, we state our main results and give some remarks.
In Section 3, we show Liouville-type theorem for stable weak solutions in $\RN$,
that is, the non-existence of nontrivial stable weak solutions.
We also find the fact that there is a dividing curve $p=p_c(l,\mu)$
in the $(\mu,p)$-plane with respect to non-existence of stable solutions.
In Section 4, we prove non-existence of nontrivial finite Morse index weak solutions
in $\RN$ and $\RN\setminus\{0\}$ by using their behaviors near the origin and infinity.
In Section 5, we investigate the stability of positive radial solutions,
and then we find the existence of a stable weak solution in $\RN$
for certain range of $p$ and $\mu$.


\section{Main results}

To describe results of this paper more precisely,
we first introduce the following definitions.

\begin{df}\label{def}
We say that $u$ is a weak solution of \eqref{H} in $\Omega$
if $u\in W^{1,2}_{loc}(\Omega)\cap L^{\infty}_{loc}(\Omega)$ and
\begin{equation}\label{def-wk sol}
\into \nabla u\cdot\nabla\phi-\dfrac{\mu}{|x|^2}u\phi-|x|^l|u|^{p-1}u\phi\,dx=0
\qquad \textrm{for all}\ \ \phi\in C^1_c(\Omega).
\end{equation}
\end{df}

We see that a weak solution of (\ref{H}) in $\Omega$ is a classical solution
of (\ref{H}) in $\Omega\setminus\{0\}$
by the standard elliptic regularity theory in \cite{GT}.

\begin{rem}\label{wk sol}\em
The local boundedness in the above definition could be weakened as in \cite[Definition 1.2]{WY}.
In other words, we can also say that $u$ is a weak solution of \eqref{H} in $\Omega$
if $u\in W^{1,2}_{loc}(\Omega)$ satisfies \eqref{def-wk sol} and $|x|^{-2}u+|x|^l|u|^{p-1}u\in L^1_{loc}(\Omega)$.
Then our results, especially Theorems \ref{thm-stable}, \ref{thm-morse2} and Corollary \ref{cor-morse1},
hold for the weak solution $u$ in the sense of \cite[Definition 1.2]{WY}, and the reason will be explained in Remark \ref{rem-sol}.
In this paper, there are some difficulties generated from behavior and regularity of solutions
near origin due to Hardy term.
Thus, for simplicity, we deal with weak solutions introduced in Definition \ref{def}.
\end{rem}

\begin{df}
A solution $u$ of \eqref{H} in $\Omega$ is said to be stable if
\begin{equation}\label{def-stable}
Q_u(\phi):=\into|\nabla \phi|^2-\dfrac{\mu}{|x|^2}\phi^2-p|x|^l |u|^{p-1}\phi^2\,dx \geq 0
\qquad \textrm{for all}\ \ \phi\in C^1_c(\Omega).
\end{equation}
\end{df}

Since $C^1_c(\Omega)$ is dense in $W^{1,2}_0(\Omega)$ and
$|x|^{-2}$, $|x|^l$ are integrable in a neighborhood of zero
(because $l>-2$ and $N\geq3$),
a test function $\phi$ can be taken from the class
of functions in $W^{1,2}_{loc}(\Omega)\cap L^{\infty}(\Omega)$
with a compact support in $\Omega$.

\begin{df}
We say that a solution $u$ of \eqref{H} has finite Morse index $k\geq0$ if
the integer $k$ is the maximal dimension of a subspace $M$ of $C^1_c(\Omega)$
satisfying $Q_u(\phi)<0$ for any $\phi\in M \setminus \{0\}$.
\end{df}

We note that $u$ is stable if and only if it has Morse index $0$.
Furthermore, every finite Morse index solution $u$ of \eqref{H} is
stable outside a compact set $K\subset\Omega$, which means that
$Q_u(\phi)\geq 0$ for all $\phi\in C^1_c(\Omega\setminus K)$.
Indeed, there exist an integer $k\geq 0$ and
a subspace $M_k:={\rm span}\{\vphi_1,\cdots,\vphi_k\}$ in $C^1_c(\Omega)$
such that $Q_u(\vphi)<0$ for all $\vphi\in M_k\setminus\{0\}$.
It implies that $Q_u(\phi)\geq 0$ for any $\phi\in C^1_c(\Omega\setminus K)$,
where $K:=\bigcup^k_{j=1}{\rm supp}(\vphi_j)$.

Before stating our main results, we introduce a new critical exponent $p=p_c(l,\mu)>1$
which is the unique solution of the following equation for $p>1$:
$$N=2+\dfrac{2(l+2)}{(p-1)\mu_+ +\hmu}\left\{\hmu+\dfrac{\hmu}{p-1}
+\sqrt{\hmu(\hmu-\mu_+)\left(1+\dfrac{1}{p-1}\right)}\right\},$$
where $\mu_+=\max\{\mu,0\}$.
It will be obtained in the proof of Theorem \ref{thm-stable}.

Now we give the non-existence result for nontrivial stable weak solutions of \eqref{H} in $\RN$.

\begin{thm}\label{thm-stable}
If $u$ is a stable weak solution of \eqref{H} in $\Omega=\RN$ with $1<p<p_c(l,\mu)$,
then $u\equiv0$.
Moreover, when $\mu\leq0$, $p_c(l,\mu)$ is constant with respect to $\mu$ given by
\begin{align*}
p_c(l,\mu)=p_c(l,0):=\left\{ \begin{array}{ll}
+\infty \qquad &\textrm{if}\ \ N\leq 10+4l,\\
\frac{(N-2)^2-2(l+2)(N+l)+2(l+2)\sqrt{(N+l)^2-(N-2)^2}}{(N-2)(N-10-4l)} \qquad &\textrm{if}\ \ N>10+4l,
\end{array}\right.
\end{align*}
and when $\mu>0$, $p=p_c(l,\mu)$ is a strictly decreasing function in $\mu$ and satisfies
\begin{align}\label{p_c limit}
\lim_{\mu\rightarrow 0^+}p_c(l,\mu)=p_c(l,0)
\qquad \textrm{and} \qquad
\lim_{\mu\rightarrow \hmu^-}p_c(l,\mu)=\dfrac{N+2+2l}{N-2}.
\end{align}
\end{thm}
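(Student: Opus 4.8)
The plan is to follow the now-classical strategy of Farina \cite{F1} and Dancer--Du--Guo \cite{DDG}, but with the crucial extra step of absorbing the Hardy term via Hardy's inequality before running the bootstrap argument. The starting point is to derive, for a stable weak solution $u$, the fundamental integral inequality: for suitable test functions, by combining the equation \eqref{def-wk sol} tested against $|u|^{2\gamma-1}u\,\eta^2$ (for an exponent $\gamma$ to be chosen) with the stability inequality \eqref{def-stable} applied to $\phi=|u|^{\gamma-1}u\,\eta$, one obtains control of $\into |x|^l|u|^{p+2\gamma-1}\eta^2\,dx$ in terms of lower-order terms involving $\nabla\eta$ and the Hardy potential. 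The key difficulty — and I expect this to be the main obstacle — is that the Hardy term $\tfrac{\mu}{|x|^2}u\phi$ does not fit into the Farina--DDG machinery directly; one cannot simply treat it as part of a ``good'' nonlinearity because it has the wrong homogeneity and, for $\mu>0$, the wrong sign. The way around this is to use Hardy's inequality $\into \tfrac{\overline\mu}{|x|^2}\phi^2\,dx\leq\into|\nabla\phi|^2\,dx$ (valid since $N\geq3$) to absorb the Hardy contribution into the Dirichlet energy, at the cost of replacing the effective ``stability constant'' $p$ by a smaller effective constant and introducing the factors $\overline\mu$, $\overline\mu-\mu_+$ that appear in the definition of $p_c(l,\mu)$. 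One must be careful that this absorption only works cleanly when $\mu>0$ (so that $\mu_+=\mu$); for $\mu\leq0$ the Hardy term has a favorable sign and can simply be dropped, which is why $p_c(l,\mu)=p_c(l,0)$ in that regime.

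The next step is the bootstrap itself. Having obtained an inequality of the form
$$
\into |x|^l|u|^{p+2\gamma-1}\eta^2\,dx \leq C\into |x|^l|u|^{p+2\gamma-1}\bigl(|\nabla\eta|^2 + |x|^{-2}\eta^2\bigr)^{\text{(appropriate power)}}\,dx,
$$
I would apply H\"older's inequality on the right-hand side and then choose a cutoff $\eta=\eta_R$ supported in $B_{2R}\setminus\emptyset$ that equals $1$ on $B_R$, with $|\nabla\eta_R|\lesssim 1/R$. Evaluating the resulting $R$-dependence and letting $R\to\infty$ forces $\into |x|^l|u|^{p+2\gamma-1}\,dx=0$ — hence $u\equiv 0$ — precisely when the exponent $\gamma$ can be chosen in an admissible interval; translating ``the admissible interval for $\gamma$ is nonempty'' into a condition on $p$, $N$, $l$, $\mu$ is exactly what produces the equation defining $p_c(l,\mu)$. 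The condition is that the discriminant-type quantity be positive, and the threshold $p=p_c(l,\mu)$ is the value at which the admissible $\gamma$-window closes to a point; solving the resulting quadratic (in the variable $1+\tfrac{1}{p-1}$, say) yields the stated transcendental equation with the square-root term $\sqrt{\overline\mu(\overline\mu-\mu_+)(1+\tfrac{1}{p-1})}$.

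It remains to establish the qualitative properties of $p_c(l,\mu)$ asserted in the theorem. For $\mu\leq0$: substituting $\mu_+=0$ into the defining equation reduces it to $N=2+\tfrac{2(l+2)}{\overline\mu}\bigl\{\overline\mu+\tfrac{\overline\mu}{p-1}+\overline\mu\sqrt{1+\tfrac1{p-1}}\bigr\}$, which after clearing $\overline\mu=(N-2)^2/4$ becomes an ordinary quadratic in $\sqrt{1+\tfrac1{p-1}}$; solving it and simplifying gives the closed-form expression, and one checks that the square root is real (and the formula is finite, with the $+\infty$ convention) exactly when $N\leq 10+4l$ versus $N>10+4l$. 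For $\mu>0$: one views the defining relation as implicitly determining $p$ as a function of $\mu$, differentiates implicitly (or, more robustly, shows the right-hand side of the defining equation is strictly monotone in each variable separately so that the level set $N=\cdots$ is a strictly decreasing graph), to get strict monotonicity of $p_c(l,\cdot)$. The two limits in \eqref{p_c limit} follow by continuity: as $\mu\to0^+$ the defining equation converges to the $\mu=0$ equation, giving $p_c(l,0)$; as $\mu\to\overline\mu^-$ the square-root term $\sqrt{\overline\mu(\overline\mu-\mu_+)(1+\tfrac1{p-1})}$ vanishes, the factor $(p-1)\mu_++\overline\mu\to(p-1)\overline\mu+\overline\mu=p\,\overline\mu$, and the equation collapses to $N=2+\tfrac{2(l+2)}{p\,\overline\mu}\cdot\overline\mu\bigl(1+\tfrac1{p-1}\bigr)=2+\tfrac{2(l+2)}{p-1}$, i.e. $p-1=\tfrac{2(l+2)}{N-2}$, which rearranges to $p=\tfrac{N+2+2l}{N-2}$ — the familiar Hardy--Sobolev critical exponent, as expected.
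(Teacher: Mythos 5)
Your proposal follows essentially the same route as the paper: test the equation with $|u|^{\gamma-1}u\varphi^2$ and the stability form with $|u|^{\frac{\gamma-1}{2}}u\varphi$, drop the Hardy term when $\mu\le 0$ and absorb it via Hardy's inequality when $\mu>0$ (this is exactly the content of Proposition \ref{prop1} and the constraint $\gamma<\gam_M(p,\mu)$), then apply H\"older's inequality with the cutoffs $\psi_R$ and let $R\to+\infty$, with $p_c(l,\mu)$ arising as the threshold at which the admissible $\gamma$-window closes, and with monotonicity in $\mu$ and the limits \eqref{p_c limit} obtained by implicit differentiation and by evaluating the defining equation at $\mu_+=0$ and $\mu=\hmu$, just as in the paper. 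The only blemish is that your displayed ``fundamental inequality'' has the same integrand on both sides as written; since your subsequent H\"older-plus-cutoff step describes the correct estimate, this reads as a typo rather than a gap.
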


\begin{rem}\em
When $\mu=l=0$, a non-existence result of Farina \cite[Theorem 1]{F1} asserts that
if $u$ is a stable solution in $\RN$ and $p$ satisfies
\begin{align*}
\left\{ \begin{array}{ll}
1<p<+\infty \qquad &\textrm{if}\ \ 2\leq N\leq 10,\\
1<p<p_c(0,0)=\frac{(N-2)^2-4N+8\sqrt{N-1}}{(N-2)(N-10)} \qquad &\textrm{if}\ \ N\geq11,
\end{array}\right.
\end{align*}
then $u$ is identically zero.
Recently this result was extended into a more general case $\mu=0$ and $l>-2$
in \cite[Theorem 1.2]{DDG}.
In this regard, Theorem \ref{thm-stable} is a natural extension
of results in \cite{DDG} and \cite{F1}.
The proof of Theorem \ref{thm-stable} is based on the methods
introduced in \cite{F1}.
\end{rem}

The exponent $p=p_c(l,\mu)$ plays a crucial role in this paper.
Indeed, the exponent $p=p_c(l,\mu)$ determines
the region of $(\mu,p)$ in which (\ref{H}) does not admit
stable weak solutions.
See the regions (``Unstable" regions) below the curve $p=p_c(l,\mu)$ in Figure 1 and Figure 2.

\begin{figure}[hbt]
\centering
\centerline{\includegraphics[scale = 0.6]{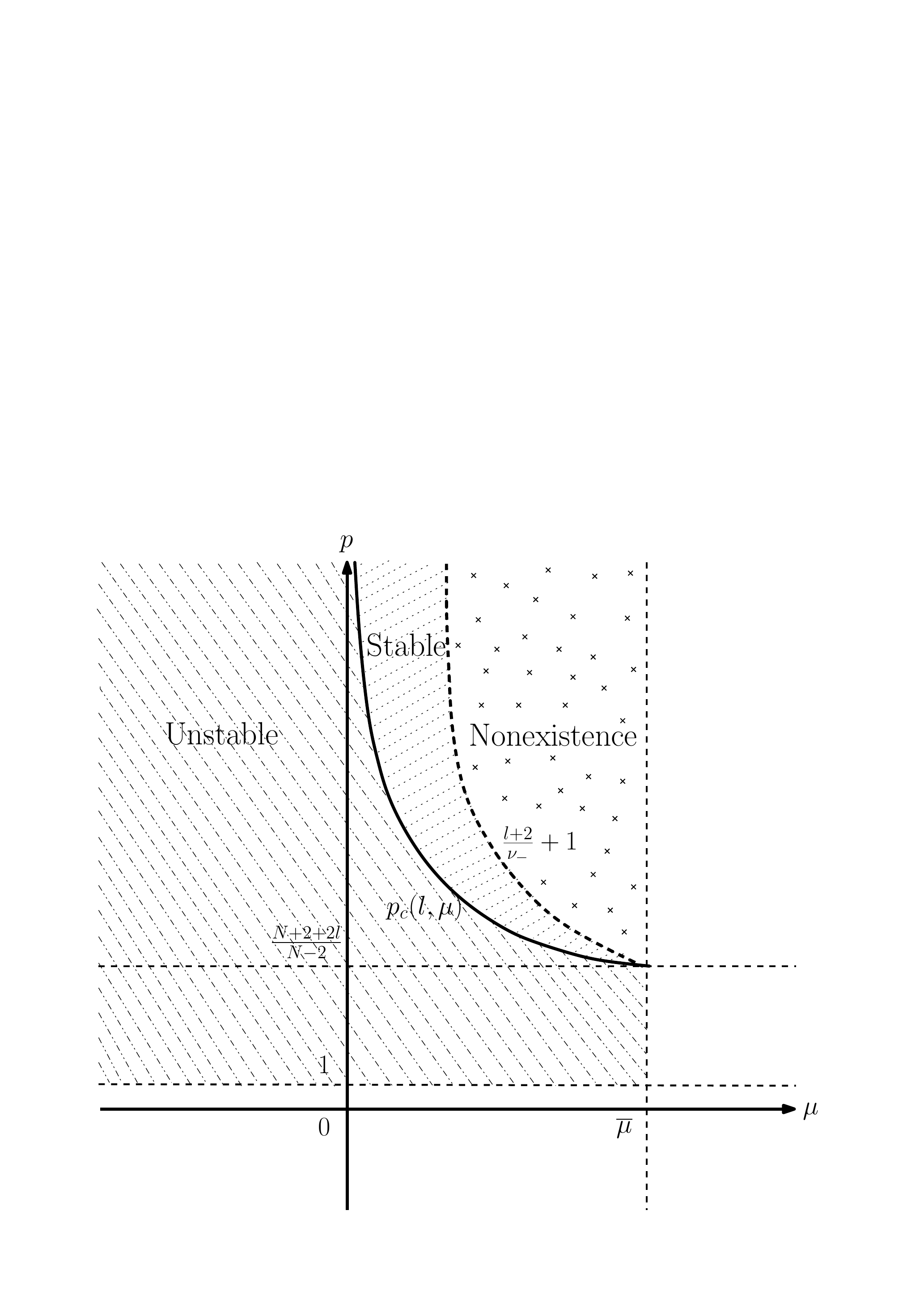}}
\caption{The case $N\leq 10+4l$}
\end{figure}
\begin{figure}[hbt]
\centering
\centerline{\includegraphics[scale = 0.6]{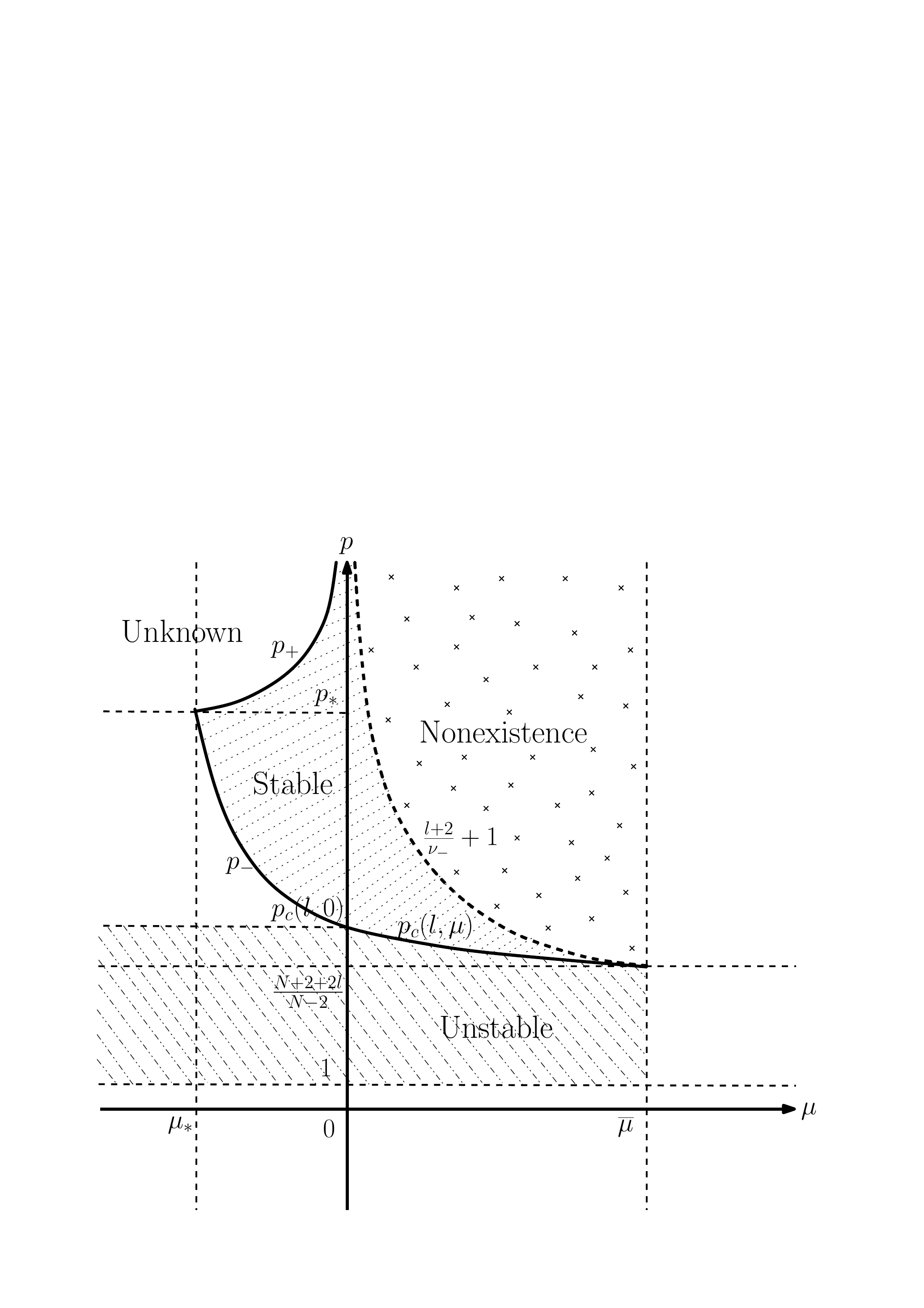}}
\caption{The case $N> 10+4l$}
\end{figure}

The next theorems say that there is no nontrivial finite Morse index
weak solution of (\ref{H}) in $\RN\setminus\{0\}$
if $1<p<p_c(l_-,\mu)$ with $l_-=\min\{l,0\}$ and $p\neq(N+2+2l)/(N-2)$.
The same result still holds in $\RN$.
We denote by $B_R(0)$ the open ball centered at zero with radius $R$ in $\RN$.

\begin{thm}\label{thm-morse1}
Suppose that $u$ is a finite Morse index weak solution of {\em \eqref{H}} in $\Omega=\RN\setminus\{0\}$.
Assume that
\begin{equation}\label{thm2.2-condition}
\int_{B_R(0)}|\nabla u|^2+|x|^l|u|^{p+1}\,dx<+\infty \qquad \textrm{for some}\ \ R>0.
\end{equation}
If $1<p<(N+2+2l)/(N-2)$, then $u\equiv 0$.
\end{thm}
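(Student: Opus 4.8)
The plan is to exploit the integral estimate satisfied by finite Morse index solutions, following the scheme of Farina \cite{F1} and Dancer--Du--Guo \cite{DDG}, but adapted to the Hardy term. First, since $u$ has finite Morse index, it is stable outside some compact set $K$, and since $\Omega = \RN\setminus\{0\}$, we may enlarge $K$ so that $K \subset B_{R_0}(0)\setminus B_{r_0}(0)$ fails --- more precisely we use that stability holds on $C^1_c(\RN\setminus \overline{B_{R_0}(0)})$ for large $R_0$ and, separately, near the origin the hypothesis \eqref{thm2.2-condition} gives the finiteness of the Dirichlet and nonlinear energies on $B_R(0)$. The first main step is to absorb the Hardy potential: given a test function $\phi$ supported away from the ``bad'' compact set, write the stability inequality $Q_u(\phi)\geq 0$ and use Hardy's inequality $\into \mu |x|^{-2}\phi^2 \leq \mu_+ \hmu^{-1}\into |\nabla\phi|^2$ (when $\mu>0$; when $\mu\le 0$ the term has a favorable sign) to obtain a ``reduced'' stability inequality of the form $c_\mu \into |\nabla\phi|^2 \geq p\into |x|^l|u|^{p-1}\phi^2$ with $c_\mu = 1-\mu_+/\hmu \in (0,1]$. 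This is exactly the device the introduction advertises.

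The second step is the standard bootstrap on powers of $u$. Test the weak equation \eqref{def-wk sol} with $\phi = u |u|^{2\gamma-2}\eta^2$ for a suitable exponent $\gamma>1$ and a cutoff $\eta$, and test the reduced stability inequality with $\phi = |u|^{\gamma-1}u\,\eta$; combining the two, after Young's inequality and integration by parts, yields
\begin{equation*}
\int |x|^l |u|^{p+2\gamma-1}\eta^2 \,dx \;\leq\; C \int |x|^l|u|^{2\gamma-1}\cdot(\text{l.o.t.}) + C\int |u|^{2\gamma}\,|\nabla\eta|^2\,\cdots
\end{equation*}
leading, after choosing $\eta$ to be a standard annular cutoff that is $1$ on $B_{2r}\setminus B_{r/2}$-type regions away from $0$ and $\infty$, to an estimate
\begin{equation*}
\int_{(B_{2r}\setminus B_r)\cup(\text{far annulus})} |x|^l|u|^{p+2\gamma-1}\,dx \;\leq\; C\, r^{N - \frac{(l+2)(p+2\gamma-1)}{p-1}} \quad \text{(roughly)},
\end{equation*}
where the admissible range of $\gamma$ is governed precisely by the quadratic inequality whose threshold defines $p_c(l_-,\mu)$. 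The condition $1<p<(N+2+2l)/(N-2)$ (which is $\lim_{\mu\to\hmu^-}p_c(l,\mu)$, hence certainly below $p_c(l_-,\mu)$ for all admissible $\mu$) guarantees the exponent $N - (l+2)(p+2\gamma-1)/(p-1)$ is negative for an allowed $\gamma$, so letting $r\to 0$ near the origin and $r\to\infty$ near infinity forces $\into |x|^l |u|^{p+1}\,dx = 0$, hence $u\equiv 0$ on $\Omega$, and then $u\equiv 0$ everywhere by unique continuation / the weak-solution regularity noted after Definition \ref{def}.

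The delicate point --- and the reason this is stated as a separate theorem with hypothesis \eqref{thm2.2-condition} --- is the behavior near the singularity $x=0$. One must handle cutoffs $\eta$ that vanish near the origin without losing control: the singular weight $|x|^l$ and the Hardy term $|x|^{-2}$ interact with the cutoff's gradient $|\nabla\eta|\sim r^{-1}$, and one needs \eqref{thm2.2-condition} to start the iteration (it gives the base case $\gamma$ close to $1$ is finite on $B_R(0)$), plus the stability-outside-a-compact-set property to push the iteration through the intermediate annulus containing $K$. I expect the main obstacle to be bookkeeping the two-sided nature of the cutoff argument (near $0$ and near $\infty$ simultaneously) and verifying that the exponent arithmetic closes precisely at $(N+2+2l)/(N-2)$ rather than at the larger $p_c(l_-,\mu)$ --- i.e., that no extra restriction is needed for the Morse-index (as opposed to globally stable) case in a punctured domain. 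The comparison $l$ vs.\ $l_-$ enters here: near $0$ the weight $|x|^l$ with $l\ge 0$ is harmless but with $l<0$ is singular, and near $\infty$ the roles reverse, which is why $p_c(l_-,\mu)$ appears in Theorems \ref{thm-morse1}--\ref{thm-morse2} while this particular theorem only claims the weaker, explicit bound $(N+2+2l)/(N-2)$.
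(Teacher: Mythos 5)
There is a genuine gap at the decisive step. Because $u$ has finite Morse index, the stability inequality is only available for test functions supported outside a compact set, so the cutoffs in your iteration must avoid that set. Consequently the integral estimates you can actually derive (these are Lemmas \ref{lem1} and \ref{lem2} in the paper) have the form
\begin{equation*}
\int_{\{R_0+2<|x|<r\}}\left|\nabla\left(|u|^{\frac{\gamma-1}{2}}u\right)\right|^2+|x|^l|u|^{p+\gamma}\,dx\ \le\ C_1+C_2\,r^{\,N-\frac{(\gamma+1)l+2(p+\gamma)}{p-1}},
\end{equation*}
with an additive constant $C_1$ coming from the transition annulus where the cutoff is not governed by stability. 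When the exponent of $r$ is negative (which is exactly what $1<p<(N+2+2l)/(N-2)$ gives for $\gamma=1$), letting $r\to\infty$ (and similarly $\ep\to0$ near the origin) yields only \emph{finiteness} of $\intr|\nabla u|^2$ and $\intr|x|^l|u|^{p+1}$, not that these integrals vanish. Your claim that ``letting $r\to0$ near the origin and $r\to\infty$ near infinity forces $\intr|x|^l|u|^{p+1}\,dx=0$'' is the argument for globally stable solutions (Theorem \ref{thm-stable}), and it does not carry over: the compact set where stability fails, and hence $C_1$, cannot be removed. This is precisely why hypothesis \eqref{thm2.2-condition} and the explicit subcritical bound appear.

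The paper closes the argument with a different mechanism that your proposal is missing: a Pohozaev identity. Using $\gamma=1$ in Lemma \ref{lem2} together with \eqref{thm2.2-condition} gives $\nabla u\in L^2(\RN)$ and $\intr|x|^l|u|^{p+1}<\infty$; one then applies Pohozaev's identity on annuli $\{\sigma_n<|x|<R_n\}$, choosing sequences $\sigma_n\to0$, $R_n\to\infty$ along which the boundary terms vanish (possible because the radial integral of the boundary density is finite), and separately tests the weak equation with $u\,\vphi_{R,0}$ and lets $R\to\infty$. Comparing the two resulting identities yields $\left(\frac{N-2}{2}-\frac{N+l}{p+1}\right)\intr|x|^l|u|^{p+1}\,dx=0$, and the strict negativity of this coefficient for $1<p<(N+2+2l)/(N-2)$ forces $u\equiv0$. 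So the threshold $(N+2+2l)/(N-2)$ comes from the Pohozaev coefficient, not from ``exponent arithmetic closing'' in a bootstrap; the supercritical range up to $p_c(l_-,\mu)$ is handled in Theorem \ref{thm-morse2} by an entirely different argument (decay estimates near $0$ and $\infty$ plus an energy monotonicity in Emden--Fowler variables), which is also why $l_-$ never enters the present theorem.
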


In the following case, the condition \eqref{thm2.2-condition} is not needed.

\begin{cor}\label{cor-morse1}
If $u$ is a finite Morse index weak solution of \eqref{H} in $\Omega=\RN$
with $1<p<(N+2+2l)/(N-2)$, then $u\equiv 0$.
\end{cor}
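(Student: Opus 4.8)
The plan is to deduce Corollary \ref{cor-morse1} directly from Theorem \ref{thm-morse1}. The only things to check are that a finite Morse index weak solution on the whole space $\RN$ restricts to a finite Morse index weak solution on the punctured space $\RN\setminus\{0\}$, and that on $\RN$ the extra integrability hypothesis \eqref{thm2.2-condition} of Theorem \ref{thm-morse1} is automatic.

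First I would record the restriction step. If $u$ is a weak solution of \eqref{H} in $\Omega=\RN$ in the sense of Definition \ref{def}, then $u\in W^{1,2}_{loc}(\RN)\cap L^{\infty}_{loc}(\RN)$ and \eqref{def-wk sol} holds for every $\phi\in C^1_c(\RN)$. Since $C^1_c(\RN\setminus\{0\})\subset C^1_c(\RN)$ and since $W^{1,2}_{loc}(\RN)\cap L^{\infty}_{loc}(\RN)\subset W^{1,2}_{loc}(\RN\setminus\{0\})\cap L^{\infty}_{loc}(\RN\setminus\{0\})$, the function $u$ is also a weak solution of \eqref{H} in $\RN\setminus\{0\}$. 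Similarly, any finite-dimensional subspace $M\subset C^1_c(\RN\setminus\{0\})$ on which $Q_u<0$ is also such a subspace of $C^1_c(\RN)$, so the Morse index of $u$ in $\RN\setminus\{0\}$ is at most its Morse index in $\RN$; in particular it is finite.

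Next I would verify \eqref{thm2.2-condition}. Fix any $R>0$. Because $u\in W^{1,2}_{loc}(\RN)$ we have $\int_{B_R(0)}|\nabla u|^2\,dx<+\infty$, and because $u\in L^{\infty}_{loc}(\RN)$ it is bounded on $B_R(0)$, so that $\int_{B_R(0)}|x|^l|u|^{p+1}\,dx\leq \|u\|_{L^{\infty}(B_R(0))}^{p+1}\int_{B_R(0)}|x|^l\,dx<+\infty$, the last integral being finite precisely because $l>-2$ and $N\geq3$. Hence \eqref{thm2.2-condition} holds, and applying Theorem \ref{thm-morse1} on $\RN\setminus\{0\}$ (note that $1<p<(N+2+2l)/(N-2)$ in particular rules out the borderline value $p=(N+2+2l)/(N-2)$) yields $u\equiv0$ on $\RN\setminus\{0\}$, hence $u\equiv0$ on $\RN$ since $\{0\}$ is negligible (or by the local boundedness built into Definition \ref{def}).

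There is no real obstacle here: the entire content of the corollary is that on the full space the local energy bound near the origin comes for free, the only point worth emphasizing being the local integrability of $|x|^l$ at the origin, which is exactly where the standing assumptions $l>-2$ and $N\geq3$ enter. I would also remark, consistently with Remark \ref{wk sol}, that under the weakened notion of weak solution the same reduction works once one knows the behavior of $u$ near the origin used to establish Theorem \ref{thm-morse1}.
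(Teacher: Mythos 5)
Your proposal is correct and follows essentially the same route as the paper: the paper likewise treats the corollary by noting that a finite Morse index solution in $\RN$ is in particular stable on a punctured ball and an exterior domain, and that for a weak solution in the sense of Definition \ref{def} the hypothesis \eqref{thm2.2-condition} is automatic near the origin (since $u\in W^{1,2}_{loc}(\RN)\cap L^{\infty}_{loc}(\RN)$ and $|x|^l$ is locally integrable for $l>-2$, $N\geq3$), so the argument of Theorem \ref{thm-morse1} applies. Your explicit verification that the Morse index can only decrease upon restricting test functions to $C^1_c(\RN\setminus\{0\})$ is a correct formalization of what the paper leaves implicit.
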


\begin{thm}\label{thm-morse2}
Suppose that $u$ is a finite Morse index weak solution of \eqref{H} in $\Omega=\RN\setminus\{0\}$, $\RN$.
Assume that $(N+2+2l)/(N-2)< p < p_c(l_-,\mu)$
with $l_-=\min\{l,0\}$.
Then $u \equiv 0$.

Moreover, when $p=(N+2+2l)/(N-2)$,
every finite Morse index weak solution $u$ of \eqref{H} in $\RN\setminus\{0\}$ satisfies
\begin{eqnarray}
\label{thm2.3-limit1} \lim_{|x|\to 0}|x|^{\frac{l+2}{p-1}}|u(x)|=
\lim_{|x|\to 0}|x|^{1+\frac{l+2}{p-1}}|\nabla u(x)|=0, \\
\label{thm2.3-limit2} \lim_{|x|\to +\infty}|x|^{\frac{l+2}{p-1}}|u(x)|=
\lim_{|x|\to +\infty}|x|^{1+\frac{l+2}{p-1}}|\nabla u(x)|=0.
\end{eqnarray}
\end{thm}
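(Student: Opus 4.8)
The plan is to combine the Liouville theorem for stable solutions (Theorem \ref{thm-stable}) with a Kelvin-type transform and sharp decay estimates, following the scheme of \cite{F1} and \cite{DDG}. Since $u$ has finite Morse index, it is stable outside a compact set $K$, hence stable outside some annulus $B_{R_0}(0)\setminus\{0\}$ is not quite the right statement---rather, stable in $\Omega\setminus B_{R_1}(0)$ for $R_1$ large and (since $0$ may lie in $K$) stable also inside a small punctured ball when $\mu\le 0$; the delicate point near the origin is handled by the Hardy-inequality absorption trick described in the introduction. First I would establish, via the Pohozaev-type identity together with the stability inequality applied to cutoff test functions of the form $\phi=u\eta^m$ (with $\eta$ a logarithmic cutoff adapted to the singular geometry), the a priori integrability bound $\int_{\RN\setminus B_R(0)}|x|^l|u|^{p+1}+|\nabla u|^2\,dx<+\infty$ when $p<p_c(l_-,\mu)$. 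The comparison exponent is $p_c(l_-,\mu)$ rather than $p_c(l,\mu)$ because for $l>0$ the weight $|x|^l$ is favorable near the origin but the binding constraint comes from the behavior at infinity where, after a Kelvin transform $v(x)=|x|^{-(N-2)}u(x/|x|^2)$, the transformed equation carries the weight $|x|^{l'}$ with $l'$ effectively replaced by its worse sign, i.e.\ $l_-$.

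Next, for the subcritical case $(N+2+2l)/(N-2)<p<p_c(l_-,\mu)$: I would show that the finite-energy bound just obtained, combined with the equation, forces the scaling-invariant quantity $E(r)=r^{\frac{2(l+2)}{p-1}-(N-2)}\int_{\partial B_r}(\cdots)$ to be monotone (this is the monotonicity formula of Pacard/Farina type adapted to the Hardy weight), and that $E(r)\to 0$ as $r\to 0$ and as $r\to\infty$, hence $u$ is homogeneous of the critical degree $-\frac{l+2}{p-1}$; but a nonzero homogeneous solution of that degree exists only when $p$ equals the critical Sobolev-type exponent $(N+2+2l)/(N-2)$, a contradiction. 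Alternatively---and this is probably the cleaner route matching the introduction---I would use the decay estimates together with a doubling/blow-down argument to reduce to a global stable solution on $\RN$ (or on $\RN\setminus\{0\}$, where a Kelvin transform about the origin turns the puncture into a point at infinity and one invokes Theorem \ref{thm-stable} on the transformed stable solution), and conclude $u\equiv 0$ directly from Theorem \ref{thm-stable}, using $p<p_c(l_-,\mu)\le p_c(l,\mu)$ on the relevant piece after transformation.

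For the critical case $p=(N+2+2l)/(N-2)$, the goal is only the decay rates \eqref{thm2.3-limit1}--\eqref{thm2.3-limit2}, not $u\equiv 0$. Here I would again exploit stability outside a compact set: near infinity, the finite-Morse-index hypothesis gives stability in an exterior domain, and interior elliptic estimates (Theorem \ref{thm-stable}'s methods give $\int|x|^l|u|^{p+1}<\infty$ on exterior domains when $p\le p_c(l_-,\mu)$, and equality is allowed at the endpoint) plus a De Giorgi--Nash--Moser iteration on dyadic annuli yield $|u(x)|=o(|x|^{-(l+2)/(p-1)})$ and the matching gradient bound by Schauder estimates on rescaled balls $B_1$ pulled back to $B_{|x|/2}(x)$. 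Near the origin the same argument is run after the inversion $x\mapsto x/|x|^2$, with the Hardy term transforming into a Hardy term with the same constant $\mu$ (conformal invariance of the Hardy potential under Kelvin transform) and the weight exponent $l$ becoming $-4-2l+N$-shifted appropriately; the condition $\mu<\hmu$ is exactly what keeps the transformed operator coercive so that the iteration closes.

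The main obstacle I expect is the origin: unlike in \cite{DDG}, the singular Hardy term destroys the naive Caccioppoli estimates, so every cutoff argument near $0$ must first absorb $\mu|x|^{-2}u^2$ using Hardy's inequality $\int|\nabla\phi|^2\ge\hmu\int|x|^{-2}\phi^2$, which only leaves room because $\mu<\hmu$; getting the constants in this absorption sharp enough to reach the exponent $p_c(l_-,\mu)$ (and not something strictly smaller) is the technical heart of the proof and is presumably where the $\mu_+$ versus $\mu_-$ asymmetry in the definition of $p_c$ enters. The secondary difficulty is verifying that the Kelvin transform about the origin produces an equation of exactly the form \eqref{H} with an admissible weight exponent so that Theorem \ref{thm-stable} applies verbatim; this is a bookkeeping computation but must be done carefully because the puncture-at-origin case $\Omega=\RN\setminus\{0\}$ has no direct analogue in \cite{F1}.
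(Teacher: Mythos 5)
Your overall architecture for the decay estimates is essentially the paper's: stability outside a compact set, integral bounds obtained by absorbing the Hardy term via Hardy's inequality (Proposition \ref{prop1}), Serrin/Moser local boundedness on balls $B_{|y|/8}(y)$ to get $\lim |x|^{\frac{l+2}{p-1}}|u(x)|=0$ and the scaled gradient bound, and a Kelvin transform to transfer one regime to the other; the critical case $p=(N+2+2l)/(N-2)$ then follows from these decay estimates alone, and running the inversion in the opposite direction from the paper (infinity directly, origin by inversion) is harmless. One small correction of emphasis: in the paper the exponent $p_c(l_-,\mu)$ arises in the local boundedness step (for $l\ge 0$ one needs the unweighted condition $N(p-1)/2<p+\gamma$ with $\gamma<\gam_M$, i.e. $p<p_c(0,\mu)$, to place $|x|^l|u|^{p-1}$ in $L^{N/(2-\kappa)}$ with uniform constants), not through a sign loss of the weight after inversion at infinity.

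The genuine gap is the step that converts the decay into $u\equiv 0$ on the range $(N+2+2l)/(N-2)<p<p_c(l_-,\mu)$: both of your routes fail as stated. Route (a) rests on the claim that a nonzero homogeneous solution of degree $-\frac{l+2}{p-1}$ exists only at the critical exponent; this is false, since $U_s(x)=(L^{p-1}-\mu)^{\frac{1}{p-1}}|x|^{-\frac{l+2}{p-1}}$ with $L^{p-1}=\frac{l+2}{p-1}\bigl(N-2-\frac{l+2}{p-1}\bigr)$ solves \eqref{H} in $\RN\setminus\{0\}$ throughout this range whenever $\mu<L^{p-1}$, so homogeneity alone yields no contradiction; one must combine it with the decay estimates, which force the angular profile to vanish. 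Route (b) cannot invoke Theorem \ref{thm-stable}: that theorem tests against all $\phi\in C^1_c(\RN)$, including functions charging the origin, whereas a finite Morse index solution in $\RN\setminus\{0\}$ is only stable outside a compact set, and neither a Kelvin inversion nor a blow-down upgrades this (inversion maps stability near infinity to stability in a punctured ball, and a blow-down limit is at best stable in $\RN\setminus\{0\}$, a case not covered by Theorem \ref{thm-stable}). The paper's actual mechanism, absent from your sketch, is the Emden--Fowler change of variables $u=r^{-\frac{l+2}{p-1}}v(t,\sigma)$, $t=\ln r$, and the energy identity $A\int_{S^{N-1}}v_t^2\,d\sigma=\frac{d}{dt}\bigl(E(v)-\frac12\int_{S^{N-1}}v_t^2\,d\sigma\bigr)$ with damping coefficient $A=N-2-2\frac{l+2}{p-1}\neq 0$: integrating over $t\in\mathbb{R}$ and using the decay of $v$, $v_t$, $\nabla_{S^{N-1}}v$ at $t\to\pm\infty$ (i.e. Lemmas \ref{lem3} and \ref{lem4}) forces $v_t\equiv 0$, hence $v\equiv 0$ and $u\equiv 0$. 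This is precisely where the hypothesis $p\neq(N+2+2l)/(N-2)$ enters, and without this step (or a correctly concluded monotonicity argument of the same kind) your proof does not close.
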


On the other hand, we investigate the existence of a stable weak solution of (\ref{H}) in $\RN$.
For this purpose, we consider the following set:
\begin{equation*}
\begin{aligned}
S:=\Big{\{}&(\mu,p)\in(-\infty,\hmu)\times(1,+\infty)\ : \\
&\ p_c(l,\mu)\leq p<\dfrac{l+2}{\nu_-}+1 \qquad \textrm{if}\ \
0<\mu<\hmu\ \ \textrm{and}\ \ N\geq3, \\
&\ p_{-}\leq p \leq p_{+} \qquad \textrm{if}\ \ \mu_{\ast}\leq\mu\leq0\ \ \textrm{and}\ \ N>10+4l \Big{\}},
\end{aligned}
\end{equation*}
where
\begin{enumerate}[(i)]
\item $\nu_-:=\left[N-2-\sqrt{(N-2)^2-4\mu}\right]/2=\sqrt{\hmu}-\sqrt{\hmu-\mu}$ is one root of the equation
$\mu=\nu(N-2-\nu)$ for $\nu$;
\item $\mu_{\ast}:=-(2N+l-2)(N-10-4l)^2/108(l+2)$;
\item $p_{\pm}$ are solutions of the following equation for $p>1$:
$$\mu=\dfrac{l+2}{p-1}\left(N-2-\dfrac{l+2}{p-1}\right)
-\dfrac{1}{4(p-1)}\left(N-2-2\,\dfrac{l+2}{p-1}\right)^2$$
when $\mu_{\ast}\leq\mu\leq0$,
and two curves $p=p_-(\mu)$, $p_+(\mu)$ are decreasing, increasing in $\mu\in[\mu_{\ast},0]$ respectively;
\item the curves $p=p_{\pm}$ satisfy
$$\lim_{\mu\to 0^-}p_{-}=p_c(l,0), \qquad \lim_{\mu\to 0^-}p_{+}=+\infty,
\qquad \lim_{\mu\to{\mu_{\ast}}^+}p_{\pm}=p_{\ast}:=\dfrac{N+2+2l}{N-10-4l}.$$
\end{enumerate}
The dashed regions (``Stable" regions) in Figure 1 and Figure 2 describe the set $S$ in the $(\mu,p)$-plane.
Then we observe the following existence result.

\begin{thm}\label{thm-exist}
The equation \eqref{H} admits a family of stable positive radial solutions in $W^{1,2}_{loc}(\RN)$
if $(\mu,p)\in S$.
\end{thm}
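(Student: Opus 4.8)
The plan is to construct the stable positive radial solution explicitly as a singular power solution and then verify stability via Hardy's inequality. The natural ansatz is $u_s(x) = A|x|^{-\frac{l+2}{p-1}}$, since plugging this into \eqref{H} forces $A^{p-1}$ to equal a specific constant $C(N,l,\mu,p)$ coming from the equation $\Delta(|x|^{-\gamma}) = \gamma(\gamma - N + 2)|x|^{-\gamma-2}$ with $\gamma = \frac{l+2}{p-1}$; one needs $A^{p-1} = -\gamma(\gamma-N+2) - \mu = \frac{l+2}{p-1}\bigl(N-2-\frac{l+2}{p-1}\bigr) - \mu$, which must be positive. First I would check that this positivity holds precisely on the parameter set $S$: for $\mu \le 0$ it is automatic in the relevant range, and for $0 < \mu < \hmu$ the upper bound $p < \frac{l+2}{\nu_-}+1$ (equivalently $\gamma < \nu_-$, forcing $\gamma < N-2-\gamma$ since $\nu_- \le \frac{N-2}{2}$ and $\mu = \nu_-(N-2-\nu_-)$) is exactly what guarantees $A^{p-1}>0$. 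This $u_s$ lies in $W^{1,2}_{loc}(\RN)$ iff $2\gamma - 2 < N - 2$, i.e. $\gamma < \frac{N}{2}$, which again must be checked against $S$.

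Next I would verify stability, i.e. that $Q_{u_s}(\phi) \ge 0$ for all test functions. Substituting $u_s$, the quadratic form becomes
$$Q_{u_s}(\phi) = \intr |\nabla\phi|^2 - \frac{\mu}{|x|^2}\phi^2 - p\,|x|^l A^{p-1}|x|^{-(l+2)}\phi^2\,dx = \intr |\nabla\phi|^2 - \frac{\mu + pA^{p-1}}{|x|^2}\phi^2\,dx,$$
so stability reduces to the sharp weighted Hardy inequality: we need $\mu + pA^{p-1} \le \hmu = \frac{(N-2)^2}{4}$. Since $A^{p-1} = \gamma(N-2-\gamma) - \mu$, this is the inequality $\mu + p\gamma(N-2-\gamma) - p\mu \le \hmu$, i.e. $p\gamma(N-2-\gamma) - (p-1)\mu \le \hmu$. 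Writing $\gamma = \frac{l+2}{p-1}$ and rearranging, this becomes exactly the condition defining the curves $p = p_\pm$ (the quadratic $\mu \ge \frac{l+2}{p-1}(N-2-\frac{l+2}{p-1}) - \frac{1}{4(p-1)}(N-2-2\frac{l+2}{p-1})^2$) for the $\mu \le 0$ regime, and for $0 < \mu < \hmu$ one checks it is implied by $p \ge p_c(l,\mu)$ together with $p < \frac{l+2}{\nu_-}+1$; indeed the definition of $p_c(l,\mu)$ in the excerpt is precisely engineered so that equality $\mu + pA^{p-1} = \hmu$ holds at $p = p_c$. So the bulk of the work is the algebraic verification that ``$(\mu,p)\in S$'' is equivalent to the two numerical inequalities ``$A^{p-1}>0$, $\gamma<N/2$'' and ``$\mu + pA^{p-1}\le\hmu$'', together with recording the limiting behaviors in (ii)--(iv) of the definition of $S$, which are straightforward limit computations on the explicit formulas.

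I would then assemble the proof: given $(\mu,p)\in S$, set $\gamma = \frac{l+2}{p-1}$, $A = \bigl(\gamma(N-2-\gamma)-\mu\bigr)^{1/(p-1)} > 0$, and $u_s = A|x|^{-\gamma}$. Verify directly that $u_s$ is a weak solution in the sense of Definition \ref{def} (or in the weakened sense of Remark \ref{wk sol}) — the only subtlety is integrability of $\frac{\mu}{|x|^2}u_s\phi$ and $|x|^l u_s^p\phi$ near $0$, which follows from $\gamma + 2 < N$ and $l - (l+2) + N = N - 2 > -N$, i.e. from $\gamma < N-2$. Then invoke the sharp Hardy inequality $\intr |\nabla\phi|^2\,dx \ge \hmu\intr \frac{\phi^2}{|x|^2}\,dx$ to conclude $Q_{u_s}(\phi)\ge 0$. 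Finally, to get a \emph{family} rather than a single solution, note that if $\mu \le 0$ one also has dilation invariance at one's disposal or, more robustly, one can perturb via the substitution reducing \eqref{H} to \eqref{CKN} and use known families there; alternatively the word ``family'' simply refers to the one-parameter collection indexed by the admissible $(\mu,p)$, or a shooting/ODE argument producing regular radial solutions asymptotic to $u_s$ at infinity. I expect the main obstacle to be the last point — producing genuinely distinct stable solutions (e.g. bounded ones, or a one-parameter family for fixed $(\mu,p)$) rather than just the singular separable solution — which likely requires an ODE phase-plane analysis of the radial equation $u'' + \frac{N-1}{r}u' + \frac{\mu}{r^2}u + r^l u^p = 0$ near $r=\infty$, showing a stable solution emanating from the equilibrium $u_s$ and using the stability inequality along this orbit; the algebraic identification of $S$ with the Hardy threshold, while lengthy, is routine.
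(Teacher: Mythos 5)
Your Hardy computation is exactly the right stability mechanism, and your reduction of the stability of the singular profile $U_s(r)=(L^{p-1}-\mu)^{1/(p-1)}r^{-(l+2)/(p-1)}$ to the inequality $\mu+p(L^{p-1}-\mu)\leq\hmu$, i.e.\ $\mu\geq L^{p-1}-A^2/(4(p-1))$, is precisely the computation in the paper's Proposition \ref{prop3}(b) and the content of its Lemma \ref{lem-set} ($S=\Sigma$). But as written your argument does not prove the theorem: it produces only the single scale-invariant solution $U_s$, and that is where the genuine gap lies. First, $U_s$ is invariant under the natural scaling $u\mapsto\tau^{\frac{l+2}{p-1}}u(\tau x)$, so dilations give nothing new, and reading ``family'' as ``indexed by $(\mu,p)$'' is not what the theorem asserts; the paper's family is, for each fixed $(\mu,p)\in S$, the one-parameter collection $u_\lambda$ of \emph{regular} radial solutions with $\lim_{r\to0^+}r^{\sqrt{\hmu}-\sqrt{\hmu-\mu}}u_\lambda(r)=\lambda>0$. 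Second, $U_s$ is not in $L^\infty_{loc}(\RN)$, so it is a weak solution only in the relaxed sense of Remark \ref{wk sol}, not in the sense of Definition \ref{def} used for the theorem. The paper's actual proof consists of exactly the steps you defer as ``likely requires an ODE phase-plane analysis'': Proposition \ref{prop2} constructs the regular orbit as the unstable manifold of the saddle $(0,0)$ for the autonomous system in the variables $w(t)=r^{(l+2)/(p-1)}u(r)$, $t=\ln r$ (using an energy $E_w$ to show positivity and global existence), and Proposition \ref{prop3}(a) proves the pointwise bound $u<U_s$ by a nontrivial phase-plane comparison with the lines $v=c(w_0-w)$, which uses $A^2-4(p-1)(L^{p-1}-\mu)\geq0$, i.e.\ membership in $S$ again; only then does your Hardy argument, applied with $|u|^{p-1}<|U_s|^{p-1}$, give stability of each $u_\lambda$. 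Without the construction of the regular solutions and the comparison $u<U_s$, the core of the proof is missing.

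Two smaller corrections: $U_s\in W^{1,2}_{loc}(\RN)$ requires $2(\gamma+1)<N$, i.e.\ $\gamma=\frac{l+2}{p-1}<\frac{N-2}{2}$ (not $\gamma<N/2$); this does hold on $S$ since $p>\frac{N+2+2l}{N-2}$. Also the upper bound $p<\frac{l+2}{\nu_-}+1$ is equivalent to $\gamma>\nu_-$ (not $\gamma<\nu_-$), which together with $\gamma,\nu_-<\frac{N-2}{2}$ and the monotonicity of $s\mapsto s(N-2-s)$ is what gives $L^{p-1}-\mu>0$. Finally, the equivalence of ``$p\geq p_c(l,\mu)$'' with the Hardy condition for $0<\mu<\hmu$, which you label routine, is exactly the cubic analysis $h_\mu(m)\geq0$ carried out in Lemma \ref{lem-set}; a complete proof must include it.
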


\begin{rem}\em
(a) The above theorem shows that the exponent $p=p_c(l,\mu)$ is very sharp when $\mu\geq0$.\\
(b) The upper bound $\frac{l+2}{\nu_-}+1$ of $p$ in case $\mu>0$ is reasonable.
Indeed, there is no non-negative weak solution of \eqref{H} in $B_R(0)\setminus\{0\}$
if $0<\mu\leq\hmu$ and $p\geq\frac{l+2}{\nu_-}+1$ (see Theorem 2 and Remark 1 in \cite{BDT}).\\
(c)  {
In fact, Theorem \ref{thm-exist} is given briefly in \cite{Bae}. However, we will provide a direct and rigorous proof
in Section 5.}
\end{rem}

Throughout this paper, we complete the division of $(\mu,p)$-plane
according to the non-existence and existence of stable solutions in $\RN$
in both of cases (i) $N\leq10+4l$ and (ii) $N>10+4l$ and $\mu\geq0$.
However for the remaining case $N>10+4l$ and $\mu<0$, especially ``Unknown" region in Figure 2,
we could not finish the work.
This is an open problem to be considered.

\begin{rem}\em
As we said in Section 1,
the equivalent problem \eqref{CKN} to \eqref{H} was recently investigated in \cite{DG1}.
For comparison, we here consider one of their results (see \cite[Theorem 1.9]{DG1}).
Set $N'=N+\theta$ and $\tau=\alpha-\theta$ and assume that $N'>2$ and $\tau>-2$.
Then it was proved in \cite{DG1} that
if $u$ is a stable solution of \eqref{CKN} with
\begin{align}\label{DG-range}
\left\{ \begin{array}{ll}
1<p<+\infty \ \ &\textrm{and}\ \ \ 2< N'\leq 10+4\tau,\\
1<p<p_c(N',\tau) \ \ &\textrm{and}\ \ \ N'>10+4\tau,
\end{array}\right.
\end{align}
$u$ is identically zero, and that if $p\geq p_c(N',\tau)$, \eqref{CKN} admits
a family of stable positive radial solutions in $\RN$.
This is a result corresponding to our results, Theorems \ref{thm-stable} and \ref{thm-exist}.
In our case, we focus on representing explicit ranges of $N$ and $p$ determining
the existence and non-existence of finite Morse solutions of \eqref{H}.
However we encountered some obstacles when we used the techniques in \cite{DDG} and \cite{F1},
and thus our work left the unsolved region in case $\mu<0$.
In contrast with our work, replacing $(N,\mu,l)$ in our calculations by $(N',\tau)$,
the authors in \cite{DG1} was able to apply the techniques of \cite{DDG} and \cite{F1} into \eqref{CKN} without any problems
and particularly resolved the case $\mu_{\ast}\leq\mu<0$ and $p_c(l,0)\leq p<p_-$ which we could not solve.
However, since their critical exponents and cutting dimension expressed by $N'$ and $\tau$ depend on $p$,
\eqref{DG-range} in the above result does not immediately give explicit ranges of $N$ and $p$
determining the behavior of finite Morse index solutions to \eqref{CKN}
unlike our results.
\end{rem}

\begin{rem}\em
We have considered the condition $l>-2$ in \eqref{H}.
The restriction on $l$ is quite natural in some sense.
Indeed, if $l\leq-2$ and $\mu\geq0$, then
\eqref{H} does not admit a positive solution in $\RN\setminus\{0\}$.
We obtain this result by applying the argument used in \cite{DDG}.
\end{rem}

\begin{rem}\em
 {
We should inform of the originality of Figures 1 and 2 in this paper.
In \cite{Bae0, Bae}, Bae found the regions in which
the separation of positive radial solutions of \eqref{H} with $N\geq1$ and $l\in\mathbb{R}$ occurs,
and he also described the regions with the graphs in $(\mu,p)$-plane (see Figures 1-3 in \cite{Bae}).
In order to represent the regions indicating non-existence of stable solutions for \eqref{H},
we borrow two of them corresponding to the case $N\geq3$ and $l>-2$.
Moreover, we mostly follow Bae's notations in \cite{Bae} for connection with his results.}
\end{rem}


\section{Non-existence of stable solutions}
\setcounter{equation}{0}

In this section, we present a non-existence result for stable weak solutions of (\ref{H}) in $\RN$.
To this end, we first show the following proposition
which is crucially required in proving some of our results,
especially Theorems \ref{thm-stable}, \ref{thm-morse1} and \ref{thm-morse2}.
The integral estimate given in Proposition \ref{prop1} is a corresponding result of \cite[Proposition 4]{F1},
but the Hardy potential bothers us when we try to follow the proof in \cite{F1}.
Here the major technique enabling us to overcome the difficulty is Hardy's inequality.
Thus we recall Hardy's inequality as follows:
\begin{quote}
\emph{For every $u\in W^{1,2}(\RN)$, $N\geq3$, we have $u/|x|\in L^2(\RN)$ and
$$\hmu\intr |x|^{-2}|u|^2\,dx \leq \intr |\nabla u|^2\,dx,$$
where the constant $\hmu=(N-2)^2/4$ is optimal and not attained in $W^{1,2}(\RN)$.}
\end{quote}
In addition, we set $\gam_M(p,\mu)$ depending on $p$, $\mu$ and $N$ given by
\begin{equation}\label{gam max}
\gam_M(p,\mu):=\dfrac{(2\hmu-\mu_+)p+\mu_+ -\hmu+2\sqrt{\hmu(\hmu-\mu_+)p(p-1)}}{(p-1)\mu_+ +\hmu}
\end{equation}
with $\mu_+=\max\{\mu,0\}$.
Note that it is well-defined (because $\mu<\hmu$)
and $\gam_M(p,\mu)=\gam_M(p,0)=2p+2\sqrt{p(p-1)}-1$ when $\mu\leq 0$.
We point out that $\gam_M(p,\mu)$ will come out from the proof of Proposition \ref{prop1}
and it plays a crucial role in our proofs.

A full proof of Proposition \ref{prop1} will be given here.
We remark that in Proposition \ref{prop1}, we do not need any restriction on $l$.

\begin{prop}\label{prop1}
Let $\Omega$ be a domain (bounded or not) of $\RN$.
Suppose that $u$ is a stable weak solution of \eqref{H} in $\Omega$.
Then, for any integer $m \geq {\rm max}\{\frac{p+\gam}{p-1}, 2\}$
and any $\gam\in[1,\gam_M(p,\mu))$,
there exists a constant $C>0$  such that
$$\into \left(\left|\nabla \left(|u|^{\frac{\gamma-1}{2}}u\right)\right|^2
+|x|^l|u|^{p+\gamma}\right)|\psi|^{2m}\,dx
\leq C \into |x|^{\frac{(\gamma+1)l}{1-p}}
\left(|\nabla\psi|^2+|\psi\Delta\psi|\right)^{\frac{p+\gamma}{p-1}}\,dx$$
for all test functions $\psi \in C^2_c(\Omega)$ satisfying $|\psi|\leq1$ in $\Omega$.
\end{prop}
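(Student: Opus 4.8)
\noindent\textit{Sketch of proof.}
Set $w:=|u|^{\frac{\gam-1}{2}}u$, so that $w\in W^{1,2}_{loc}(\Omega)\cap L^{\infty}_{loc}(\Omega)$, $w^{2}=|u|^{\gam+1}$, $|u|^{p-1}w^{2}=|u|^{p+\gam}$ and $|\nabla w|^{2}=\frac{(\gam+1)^{2}}{4}|u|^{\gam-1}|\nabla u|^{2}$. Write $g:=|\nabla\psi|^{2}+|\psi\Delta\psi|$ and abbreviate
$$I:=\into|\nabla w|^{2}|\psi|^{2m}\,dx,\qquad J:=\into|x|^{l}|u|^{p+\gam}|\psi|^{2m}\,dx,\qquad K:=\into|u|^{\gam+1}|\psi|^{2m-2}g\,dx.$$
Using $u\in L^{\infty}_{loc}$, the local integrability of $|x|^{-2}$ for $N\ge3$, and --- near the origin, if $0\in{\rm supp}\,\psi$ --- the local integrability of $|x|^{l}|u|^{p}$ built into the notion of weak solution, the quantities $I$, $J$, $K$ and $\into\frac{w^{2}}{|x|^{2}}|\psi|^{2m}\,dx$ are all finite; if the right-hand side of the assertion is infinite there is nothing to prove. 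The plan is to test \eqref{def-wk sol} and \eqref{def-stable} with suitable functions, combine the two relations so as to eliminate $J$, absorb the Hardy term by Hardy's inequality, and finally close the estimate by Young's inequality.

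First I would insert $\phi=|u|^{\gam-1}u\,\psi^{2m}$ into \eqref{def-wk sol} (admissible by density), use $|u|^{\gam-1}u\nabla u=\frac{2}{\gam+1}w\nabla w$, and integrate by parts once more via $\int\psi^{2m-1}\nabla(w^{2})\cdot\nabla\psi=-\int w^{2}\,{\rm div}(\psi^{2m-1}\nabla\psi)$; this second integration by parts is precisely where $\psi\in C^{2}_{c}(\Omega)$ is used, and it produces the factor $|\psi\Delta\psi|$ in the final right-hand side. One gets an identity
$$\tfrac{4\gam}{(\gam+1)^{2}}\,I-\mu\into\tfrac{w^{2}}{|x|^{2}}|\psi|^{2m}\,dx-J=D_{A},\qquad |D_{A}|\le c_{A}\,K,$$
for some constant $c_{A}>0$. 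Next I would put $\phi=w\psi^{m}$ into the stability inequality \eqref{def-stable}; expanding $|\nabla(w\psi^{m})|^{2}$, integrating the cross term by parts in the same manner, and using $|u|^{p-1}w^{2}=|u|^{p+\gam}$, I obtain
$$I+D_{B}-\mu\into\tfrac{w^{2}}{|x|^{2}}|\psi|^{2m}\,dx\ \ge\ pJ,\qquad |D_{B}|\le c_{B}\,K,$$
with $c_{B}>0$. Substituting the identity into this inequality to eliminate $J$ yields
$$\Big(\tfrac{4p\gam}{(\gam+1)^{2}}-1\Big)I-(p-1)\mu\into\tfrac{w^{2}}{|x|^{2}}|\psi|^{2m}\,dx\ \le\ (c_{B}+pc_{A})\,K.$$

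The step I expect to be the main obstacle is controlling the Hardy term $-(p-1)\mu\into\frac{w^{2}}{|x|^{2}}|\psi|^{2m}\,dx$ just above --- this is where the Hardy potential must be dealt with. If $\mu\le0$ this term is nonnegative and may simply be dropped. If $\mu>0$ I would absorb it by Hardy's inequality applied to $w\psi^{m}\in W^{1,2}(\RN)$: the same integration by parts gives $\intr|\nabla(w\psi^{m})|^{2}\,dx=I+D_{B}$, hence $\hmu\into\frac{w^{2}}{|x|^{2}}|\psi|^{2m}\,dx\le I+D_{B}$, and therefore $-(p-1)\mu\into\frac{w^{2}}{|x|^{2}}|\psi|^{2m}\,dx\ge-\frac{(p-1)\mu}{\hmu}(I+D_{B})$. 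In either case one is left with
$$\Big(\tfrac{4p\gam}{(\gam+1)^{2}}-1-\tfrac{(p-1)\mu_{+}}{\hmu}\Big)I\ \le\ C\,K,$$
and solving the quadratic inequality $4p\gam\,\hmu>(\hmu+(p-1)\mu_{+})(\gam+1)^{2}$ for $\gam$ shows that the coefficient on the left is strictly positive exactly when $\gam<\gam_M(p,\mu)$, with $\gam_M(p,\mu)$ the number in \eqref{gam max}; at $\gam=1$ it equals $(p-1)\frac{\hmu-\mu_{+}}{\hmu}>0$, which explains why $[1,\gam_M(p,\mu))$ is the admissible range. Hence $I\le CK$, and feeding this back into the identity for $J$ --- dominating $-\mu\into\frac{w^{2}}{|x|^{2}}|\psi|^{2m}\,dx$ by $\frac{|\mu|}{\hmu}(I+D_{B})$ once more when $\mu\le0$ --- gives $J\le C(I+K)\le CK$, so that $I+J\le CK$.

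It remains to estimate $K$ by the right-hand side. Writing $|u|^{\gam+1}|\psi|^{2m-2}g=\big(|x|^{l}|u|^{p+\gam}|\psi|^{2m}\big)^{\frac{\gam+1}{p+\gam}}\cdot|x|^{-\frac{(\gam+1)l}{p+\gam}}\,|\psi|^{\,2m-2-\frac{2m(\gam+1)}{p+\gam}}\,g$ and applying Young's inequality with the conjugate exponents $\frac{p+\gam}{\gam+1}$ and $\frac{p+\gam}{p-1}$, one gets, for every $\ep>0$,
$$|u|^{\gam+1}|\psi|^{2m-2}g\ \le\ \ep\,|x|^{l}|u|^{p+\gam}|\psi|^{2m}+C_{\ep}\,|x|^{\frac{(\gam+1)l}{1-p}}\,|\psi|^{\,2m-\frac{2(p+\gam)}{p-1}}\,g^{\frac{p+\gam}{p-1}}.$$
Here the hypothesis $m\ge\max\{\frac{p+\gam}{p-1},2\}$ is exactly what makes the $\psi$-exponent $2m-\frac{2(p+\gam)}{p-1}$ nonnegative, so that $|\psi|^{\,2m-\frac{2(p+\gam)}{p-1}}\le1$ on $\{|\psi|\le1\}$ and this factor may be dropped. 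Integrating yields $K\le\ep J+C_{\ep}\into|x|^{\frac{(\gam+1)l}{1-p}}g^{\frac{p+\gam}{p-1}}\,dx$; combining this with $I+J\le CK$ and choosing $\ep$ small enough to absorb the $\ep J$ term on the left-hand side gives the claimed estimate $I+J\le C\into|x|^{\frac{(\gam+1)l}{1-p}}(|\nabla\psi|^{2}+|\psi\Delta\psi|)^{\frac{p+\gam}{p-1}}\,dx$. A secondary technical nuisance throughout is the singularity of the equation at the origin, which is the reason for keeping test functions in $C^{2}_{c}$ and, if needed, first truncating $u$ at level $T$ and letting $T\to\infty$ at the end.
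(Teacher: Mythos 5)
Your proposal is correct and follows essentially the same route as the paper: test \eqref{def-wk sol} with $|u|^{\gamma-1}u$ times a cutoff, test \eqref{def-stable} with $|u|^{\frac{\gamma-1}{2}}u$ times a cutoff, absorb the Hardy potential via Hardy's inequality applied to $w\psi^m$, and close with the cutoff $\psi^m$ and H\"older/Young. The only differences are cosmetic bookkeeping — you eliminate $J$ before invoking Hardy, so the admissible range appears as the single condition $\frac{4p\gamma}{(\gamma+1)^2}>1+\frac{(p-1)\mu_+}{\hmu}$ rather than the paper's pair $\alpha>0$, $\beta>0$, and you use Young's inequality with $\epsilon$-absorption where the paper uses H\"older and divides — both yielding the same threshold $\gamma_M(p,\mu)$ and the same final estimate.
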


\begin{proof}
We divide the proof into four steps.

\noindent\emph{Step 1. For any $\vphi \in C^2_c(\Omega)$,
\begin{equation}
\begin{aligned}\label{prop2.4-step1}
\into\left|\nabla \left(|u|^{\frac{\gamma-1}{2}}u\right)\right|^2 \vphi^2 \,dx
\ =\ & \dfrac{(\gamma+1)^2}{4\gamma}\int_{\Omega}\left(|x|^l |u|^{p+\gamma}\vphi^2
+\mu|x|^{-2}|u|^{\gamma+1}\vphi^2\right)\,dx \\
&+\ \dfrac{\gamma+1}{4\gamma}\int_{\Omega}|u|^{\gamma+1}\Delta\vphi^2\,dx.
\end{aligned}
\end{equation}}

Taking $\phi=|u|^{\gam-1}u\vphi^2\in W^{1,2}_{loc}(\Omega)\cap L^{\infty}(\Omega)$
in (\ref{def-wk sol}), we obtain the identity (\ref{prop2.4-step1}).
\\
\emph{Step 2. For any $\vphi \in C^2_c(\Omega)$,
\begin{equation}
\begin{aligned}\label{prop2.4-step2}
\left(p -\dfrac{(\gam+1)^2}{4\gam}\right)\into |x|^l |u|^{p+\gam} \vphi^2\,dx
\ \leq\ & \into |u|^{\gam+1}|\nabla \vphi|^2\,dx
+\dfrac{1-\gam}{4\gam} \into |u|^{\gam+1} \Delta \vphi^2\,dx \\
&+\ \mu\,\dfrac{(\gam-1)^2}{4\gam}\into|x|^{-2}|u|^{\gam+1}\vphi^2\,dx.
\end{aligned}
\end{equation}}

We choose a function $\phi=|u|^{\frac{\gam-1}{2}}u\vphi$
with $\vphi\in C^2_c(\Omega)$, then $\phi\in W^{1,2}_{loc}(\Omega)\cap L^{\infty}(\Omega)$
and it can be used as a test function in (\ref{def-stable}).
Using the stability of $u$, we have
\begin{eqnarray*}
p \into |x|^l |u|^{p+\gam} \vphi^2\,dx
&\leq& \into \left|\nabla \left(|u|^{\frac{\gamma-1}{2}}u\right)\right|^2 \vphi^2\,dx
+ \into |u|^{\gam+1}|\nabla \vphi|^2\,dx \\
& &-\dfrac{1}{2} \into |u|^{\gam+1} \Delta \vphi^2\,dx
- \mu \into |x|^{-2} |u|^{\gam+1} \vphi^2\,dx.
\end{eqnarray*}
It then follows from (\ref{prop2.4-step1}) that
\begin{eqnarray*}
p \into |x|^l |u|^{p+\gam} \vphi^2\,dx
&\leq& \dfrac{(\gam+1)^2}{4\gam}\into|x|^l |u|^{p+\gam}\vphi^2\,dx
+ \into |u|^{\gam+1}|\nabla \vphi|^2\,dx \\
& &+\dfrac{1-\gam}{4\gam} \into |u|^{\gam+1} \Delta \vphi^2\,dx
+ \mu\dfrac{(\gam-1)^2}{4\gam} \into |x|^{-2}|u|^{\gam+1} \vphi^2\,dx,
\end{eqnarray*}
which implies the inequality (\ref{prop2.4-step2}).
\\
\emph{Step 3. We consider two cases $\mu\leq0$ and $\mu>0$, separately.
\begin{enumerate}[{\em (i)}]
\item If $\mu\leq0$, then for any $\gam \in [1, 2p+2\sqrt{p(p-1)}-1)$,
\begin{equation}\label{prop2.4-step3-i}
\left(p-\dfrac{(\gam+1)^2}{4\gam}\right) \into |x|^l |u|^{p+\gamma}\vphi^2\,dx
\leq \into |u|^{\gam+1}|\nabla \vphi|^2\,dx
+ \dfrac{1-\gam}{4\gam} \into |u|^{\gam+1}|\vphi\Delta\vphi|\,dx,
\end{equation}
where $$p-\dfrac{(\gam+1)^2}{4\gam}>0.$$
\item If $\mu>0$, then for any $\gam \in [1,
\frac{(2\hmu-\mu)p+\mu-\hmu+2\sqrt{\hmu(\hmu-\mu)p(p-1)}}{(p-1)\mu+\hmu})$,
\begin{equation}
\begin{aligned}\label{prop2.4-step3-ii}
&\left(p-\dfrac{(\gam+1)^2}{4\gam}-\dfrac{(\gam-1)^2(\gam+1)^2}{16\gam^2\alpha}\right)
\into |x|^l |u|^{p+\gam} \vphi^2 \,dx \\
&\leq\ \left(1+\dfrac{(\gam-1)^2}{4\gam\alpha}\right)
\left\{\into |u|^{\gam+1}|\nabla\vphi|^2\,dx
+ \dfrac{1-\gam}{4\gam}\into |u|^{\gam+1}|\vphi\Delta\vphi|\,dx \right\},
\end{aligned}
\end{equation}
where $$\alpha=\frac{\hmu}{\mu}-\frac{(\gam+1)^2}{4\gam}>0 \qquad \textrm{and} \qquad
p-\dfrac{(\gam+1)^2}{4\gam}-\dfrac{(\gam-1)^2(\gam+1)^2}{16\gam^2\alpha}>0.$$
\end{enumerate}}

In case $\mu\leq0$, (\ref{prop2.4-step2}) immediately yields
the desired inequality (\ref{prop2.4-step3-i}) with
$$p-\dfrac{(\gam+1)^2}{4\gam}>0 \qquad \textrm{for all}\ \ \gam\in[1,2p+2\sqrt{p(p-1)}-1).$$

On the other hand, we get from Hardy's inequality and (\ref{prop2.4-step1}) that
\begin{eqnarray*}
\hmu \into |x|^{-2}|u|^{\gam+1}\vphi^2 \,dx
&\leq& \into \left|\nabla\left(|u|^{\frac{\gam-1}{2}}u\vphi\right)\right|^2\,dx \\
&=& \into\left|\nabla\left(|u|^{\frac{\gam-1}{2}}u\right)\right|^2\vphi^2\,dx +\into |u|^{\gam+1}|\nabla\vphi|^2\,dx
- \dfrac{1}{2}\into|u|^{\gam+1}\Delta\vphi^2\,dx \\
&=&\dfrac{(\gam+1)^2}{4\gam} \into\left(|x|^l |u|^{p+\gam}\vphi^2
+ \mu|x|^{-2}|u|^{\gam+1}\vphi^2\right)\,dx \\
& &+\into |u|^{\gam+1}|\nabla \vphi|^2\,dx
+\dfrac{1-\gam}{4\gam}\into |u|^{\gam+1}\Delta\vphi^2\,dx.
\end{eqnarray*}
This implies that
\begin{equation*}
\begin{aligned}
&\left(\dfrac{\hmu}{\mu}-\dfrac{(\gam+1)^2}{4\gam}\right)
\mu\into|x|^{-2}|u|^{\gam+1}\vphi^2\,dx \\
&\ \ \leq \dfrac{(\gam+1)^2}{4\gam} \into |x|^l |u|^{p+\gam} \vphi^2\,dx
+\into |u|^{\gam+1}|\nabla \vphi|^2\,dx
+\dfrac{1-\gam}{4\gam} \into |u|^{\gam+1} \Delta \vphi^2\,dx.
\end{aligned}
\end{equation*}
Then for any $\mu\in(0,\hmu)$, a direct computation shows that
$$\alpha=\dfrac{\hmu}{\mu}-\dfrac{(\gam+1)^2}{4\gam}>0 \qquad
\textrm{if}\ \ 1\leq \gam < \dfrac{2\hmu-\mu+2\sqrt{\hmu(\hmu-\mu)}}{\mu}$$
and
$$\gam_M(p,\mu)=\dfrac{(2\hmu-\mu)p+\mu-\hmu+2\sqrt{\hmu(\hmu-\mu)p(p-1)}}{(p-1)\mu+\hmu}
<\dfrac{2\hmu-\mu+2\sqrt{\hmu(\hmu-\mu)}}{\mu},$$
and thus $\alpha>0$ for all $\gam\in[1,\gam_M(p,\mu))$.
Therefore we have that for any $\mu\in(0,\hmu)$ and $\gam\in[1,\gam_M(p,\mu))$,
\begin{equation}
\begin{aligned}\label{prop2.4-H term}
&\mu\into|x|^{-2}|u|^{\gam+1}\vphi^2\,dx \\
&\ \ \leq\dfrac{1}{\alpha}\left\{\dfrac{(\gam+1)^2}{4\gam}\into |x|^l |u|^{p+\gam} \vphi^2\,dx
+\into |u|^{\gam+1}|\nabla \vphi|^2\,dx
+\dfrac{1-\gam}{4\gam} \into |u|^{\gam+1} \Delta \vphi^2\,dx\right\}.
\end{aligned}
\end{equation}
Applying (\ref{prop2.4-H term}) into (\ref{prop2.4-step2}), we get
\begin{equation*}
\begin{aligned}
&\left(p-\dfrac{(\gam+1)^2}{4\gam}\right) \into |x|^l |u|^{p+\gam}\vphi^2\,dx \\
&\ \ \leq \dfrac{(\gam-1)^2}{4\gam\alpha} \left\{
\dfrac{(\gam+1)^2}{4\gam} \into |x|^l |u|^{p+\gam}\vphi^2\,dx
+\into |u|^{\gam+1} |\nabla\vphi|^2\,dx
+\dfrac{1-\gam}{4\gam} \into |u|^{\gam+1}\Delta\vphi^2\,dx \right\} \\
&\ \ \ \ \ +\into |u|^{\gam+1} |\nabla\vphi|^2\,dx
+\dfrac{1-\gam}{4\gam} \into |u|^{\gam+1} \Delta\vphi^2\,dx,
\end{aligned}
\end{equation*}
where
\begin{equation*}
\beta:=p-\dfrac{(\gam+1)^2}{4\gam}-\dfrac{(\gam-1)^2(\gam+1)^2}{16\gam^2\alpha} > 0
\qquad \textrm{for all}\ \ \gam\in[1,\gam_M(p,\mu)).
\end{equation*}
We complete the proof of (\ref{prop2.4-step3-ii}).
\\
\emph{Step 4. End of proof.}

For any $\psi\in C^2_c(\Omega)$ with $|\psi|\leq 1$,
we insert the function $\vphi=\psi^m\in C^2_c(\Omega)$
into (\ref{prop2.4-step3-i}) and (\ref{prop2.4-step3-ii}) respectively.
Then it follows that
\begin{equation}\label{prop2.4-step4-1}
\into |x|^l |u|^{p+\gam} |\psi|^{2m}\,dx
\leq C_1 \into |u|^{\gam+1}|\psi|^{2m-2}\left(|\nabla\psi|^2
+ |\psi\Delta\psi|\right)\,dx
\end{equation}
for some positive constant $C_1$.
Using H\"{o}lder's inequality we obtain
\begin{equation*}
\begin{aligned}
\into |x|^l |u|^{p+\gam} |\psi|^{2m}\,dx
\leq\ &C_1 \left\{\into \left(|x|^{\frac{(\gam+1)l}{p+\gam}} |u|^{\gam+1} |\psi|^{2m-2}\right)
^{\frac{p+\gam}{\gam+1}}\,dx\right\}^{\frac{\gam+1}{p+\gam}} \\
&\times\left\{\into \left(|x|^{-\frac{(\gam+1)l}{p+\gam}}
\left(|\nabla\psi|^2 + |\psi\Delta\psi|\right)\right)
^{\frac{p+\gam}{p-1}}\,dx\right\}^{\frac{p-1}{p+\gam}}.
\end{aligned}
\end{equation*}
Here we easily see that
$m \geq {\rm max}\left\{\frac{p+\gamma}{p-1},\ 2\right\}$ implies
$2(m-1)\,\frac{p+\gam}{\gam+1} \geq 2m$,
so that $|\psi|^{2(m-1)\frac{p+\gam}{\gam+1}} \leq |\psi|^{2m}$
(because $|\psi| \leq 1$).
Hence there exists a constant $C>0$ such that
\begin{eqnarray}\label{prop2.4-step4-2}
\into |x|^l |u|^{p+\gam} |\psi|^{2m}\,dx
\leq C\into |x|^{\frac{(\gam+1)l}{1-p}}
\left(|\nabla\psi|^2 + |\psi\Delta\psi|\right)^{\frac{p+\gam}{p-1}}\,dx.
\end{eqnarray}

On the other hand, when $\mu\leq0$, we combine (\ref{prop2.4-step1}) and (\ref{prop2.4-step3-i}) to get
\begin{equation}\label{prop2.4-step4-3}
\into \left|\nabla \left(|u|^{\frac{\gamma-1}{2}}u\right)\right|^2\vphi^2\,dx
\leq A_1\into|u|^{\gam+1}|\nabla\vphi|^2\,dx+B_1\into|u|^{\gam+1}|\vphi\Delta\vphi|\,dx
\end{equation}
with positive constants $A_1$ and $B_1$,
where we just use the fact that $\mu\leq0$.
When $\mu>0$, applying (\ref{prop2.4-H term}) into (\ref{prop2.4-step1}) we obtain
\begin{equation*}
\begin{aligned}
\into \left|\nabla \left(|u|^{\frac{\gamma-1}{2}}u\right)\right|^2\vphi^2\,dx
\ \leq\ & \dfrac{(\gam+1)^2}{4\gam}\left(1+\dfrac{(\gam+1)^2}{4\gam\alpha}\right)
\into |x|^l|u|^{p+\gam}\vphi^2\,dx \\
& +\dfrac{\gam+1}{2\gam}\left(\dfrac{(\gam+1)^2}{4\gam\alpha}+1\right)\into |u|^{\gam+1}|\nabla\vphi|^2\,dx \\
& +\left(\dfrac{(\gam+1)^2}{4\gam\alpha}\dfrac{1-\gam}{2\gam}+\frac{\gam+1}{2\gam}\right)
\into|u|^{\gam+1}|\vphi\Delta\vphi|\,dx,
\end{aligned}
\end{equation*}
and then we deduce from (\ref{prop2.4-step3-ii}) that
\begin{equation}\label{prop2.4-step4-4}
\into \left|\nabla \left(|u|^{\frac{\gamma-1}{2}}u\right)\right|^2\vphi^2\,dx
\leq A_2\into|u|^{\gam+1}|\nabla\vphi|^2\,dx+B_2\into|u|^{\gam+1}|\vphi\Delta\vphi|\,dx,
\end{equation}
where $A_2$ and $B_2$ are positive constants.

Now we insert again the test function $\vphi=\psi^m\in C^2_c(\Omega)$
into (\ref{prop2.4-step4-3}) and (\ref{prop2.4-step4-4}) and we find
\begin{equation*}
\into \left|\nabla \left(|u|^{\frac{\gamma-1}{2}}u\right)\right|^2|\psi|^{2m}\,dx
\leq  C_2 \into |u|^{\gam+1}|\psi|^{2m-2}\left(|\nabla\psi|^2
+ |\psi\Delta\psi|\right)\,dx
\end{equation*}
for some positive constant $C_2$.
Using H\"{o}lder's inequality and (\ref{prop2.4-step4-2}) leads to
\begin{eqnarray}\label{prop2.4-step4-5}
\into \left|\nabla \left(|u|^{\frac{\gamma-1}{2}}u\right)\right|^2|\psi|^{2m}\,dx
&\leq& C_2 \left\{\into |x|^l |u|^{p+\gam}|\psi|^{2m}\,dx\right\}^{\frac{\gam+1}{p+\gam}} \nonumber\\
& &\ \times\left\{\into |x|^{\frac{(\gam+1)l}{1-p}} \left(|\nabla\psi|^2 + |\psi\Delta\psi|\right)
^{\frac{p+\gam}{p-1}}\,dx\right\}^{\frac{p-1}{p+\gam}} \nonumber\\
&\leq& C \into |x|^{\frac{(\gam+1)l}{1-p}}
\left(|\nabla\psi|^2 + |\psi\Delta\psi|\right)^{\frac{p+\gam}{p-1}}\,dx.
\end{eqnarray}
Adding (\ref{prop2.4-step4-2}) and (\ref{prop2.4-step4-5}) immediately yields the desired inequality.
\end{proof}

\begin{rem}\label{rem-sol}\em
We can show that Proposition \ref{prop1} still holds for a weak solution $u$ without local boundedness,
which is introduced in Remark \ref{wk sol},
by using truncations of $u$ as in \cite[Proposition 3.1]{WY}.
Indeed, let $\zeta_k(t)=\max(-k, \min(t,k))$ with $k\in\mathbb{N}$ and
use the test function $|\zeta_k(u)|^{\frac{\gam-1}{2}}u\varphi$ in \eqref{def-wk sol}
with $\varphi\in C^2_c(\Omega)$.
Then the rest of proof can be proceeded as the above proof
and finally we take $k$ tending to $\infty$ (see \cite{WY} for details).
\end{rem}

\begin{proof}[\textbf{Proof of Theorem \ref{thm-stable}}]
Suppose that $u$ is a stable weak solution of (\ref{H}) in $\RN$.
For any $R>0$, we set $\psi_R(x)=\psi(|x|/R)$
with $\psi\in C^2_c (\mathbb{R})$ satisfying $0\le\psi\le1$ in $\mathbb{R}$ and
\begin{align*}
&\psi(t)=\left \{ \begin{array}{ll}
1 \qquad \textrm{if}\ \ |t|\le1, \\
0 \qquad \textrm{if}\ \ |t|\ge2,
\end{array}\right.
\end{align*}
and we use the function $\psi_R$ as a test function in Proposition \ref{prop1}.
Then for any integer $m \geq \max\{\frac{p+\gam}{p-1}, 2\}$
and any $\gam\in[1,\gam_M(p,\mu))$,
\begin{eqnarray}\label{stable-int}
\int_{B_R(0)} |x|^l |u|^{p+\gamma}\,dx
&\le& C_1 \int_{B_{2R}(0)\setminus B_R(0)} |x|^{\frac{(\gamma+1)l}{1-p}}
\left(|\nabla\psi_R|^2+|\psi_R \Delta\psi_R|\right)^{\frac{p+\gamma}{p-1}}\,dx \nonumber\\
&\le& CR^{N-\frac{(\gamma+1)l+2(p+\gam)}{p-1}}
\qquad \textrm{for all}\ \ R>0,
\end{eqnarray}
where $C$, $C_1$ are positive constants independent of $R$.

Fix $N\geq3$ and $l>-2$.
Then we claim that, under the assumptions on $p$ and $\mu$ in Theorem \ref{thm-stable},
we can always choose $\gam\in[1,\gam_M(p,\mu))$ such that
\begin{equation}\label{show}
N-\frac{(\gamma+1)l+2(p+\gamma)}{p-1}<0.
\end{equation}
We consider the real-valued function
$f(p,\mu)$ on $(1,\infty)\times(-\infty,\hmu)$ defined by
\begin{equation}
\begin{aligned}\label{fn f}
f(p,\mu)\ &:=\ \dfrac{2p+l+(l+2)\gam_M(p,\mu)}{p-1} \\
&=\ 2+\dfrac{2(l+2)}{(p-1)\mu_+ +\hmu}\left\{\hmu+\dfrac{\hmu}{p-1}
+\sqrt{\hmu(\hmu-\mu_+)\left(1+\dfrac{1}{p-1}\right)}\right\}
\end{aligned}
\end{equation}
with $\mu_+=\max\{\mu,0\}$.
Then, by continuity of the function $t\mapsto N-\frac{2p+l+(l+2)t}{p-1}$,
we find the fact that $N-f(p,\mu)<0$ implies (\ref{show}).
Moreover since $f(p,\mu)$ is a strictly decreasing function with respect to $p$,
it holds that $N-f(p,\mu)<0$ for $1<p<p_c(l,\mu)$,
where $p_c(l,\mu)$ is the unique exponent satisfying
\begin{equation}\label{show1}
N-f(p_c(l,\mu),\mu)=0 \qquad \textrm{and} \qquad p_c(l,\mu)>1.
\end{equation}
Therefore, it suffices to find the exponent $p_c(l,\mu)$ satisfying (\ref{show1}).
The cases $\mu\leq0$ and $\mu>0$ will be treated separately.

\noindent \textbf{Case 1) $\mu\leq0$:}
The proof of this case is exactly the same as in \cite{DDG},
so we omit the details.

\noindent \textbf{Case 2) $\mu>0$:}
We observe that $f(p,\mu)$ is a strictly decreasing function in $p$ satisfying
$$\lim_{p\rightarrow 1^+}f(p,\mu)=+\infty
\qquad \textrm{and} \qquad \lim_{p\rightarrow+\infty}f(p,\mu)=2.$$
Then for each $\mu>0$,
there is a unique solution $p_c(l,\mu)>1$ of (\ref{show1}).

Now, if $1<p<p_c(l,\mu)$, by letting $R\to+\infty$ in (\ref{stable-int})
with $\gam$ satisfying (\ref{show}), we get
$$\intr |x|^l |u|^{p+\gamma}\,dx=0,$$
and so $u\equiv0$.

We turn to the properties of $p_c(l,\mu)$ for $0<\mu<\hmu$.
We claim that $p_c(l,\mu)$ is a strictly decreasing function in $\mu$
satisfying (\ref{p_c limit}).
Indeed, since $\gam_M(p,\mu)$ is a strictly decreasing function in $\mu$,
we have
\begin{equation}\label{der of f}
\dfrac{\partial f}{\partial\mu}(p,\mu)
=\dfrac{l+2}{p-1}\,\dfrac{\partial\gam_M}{\partial\mu}(p,\mu)<0
\qquad \textrm{for all}\ \ p>1.
\end{equation}
Moreover, differentiating both side of $f(p_c(l,\mu),\mu)=N$
with respect to $\mu$ we see that
$$\dfrac{\partial f}{\partial\mu}(p,\mu)
+\dfrac{\partial f}{\partial p}(p,\mu)\,
\dfrac{\partial p_c(l,\mu)}{\partial\mu}\Bigg{|}_{p=p_c(l,\mu)}=0.$$
Because of \eqref{der of f} and $\partial f/\partial p<0$,
we obtain
$$\dfrac{\partial p_c(l,\mu)}{\partial\mu}<0$$
and thus $p_c(l,\mu)$ is a strictly decreasing function in $\mu$.
Furthermore, by the implicit function theorem,
we find that $p=p_c(l,\mu)$ is a $C^1$ curve with respect to $\mu$.
Therefore the limit behavior of $p_c(l,\mu)$ satisfies (\ref{p_c limit});
here we use the fact that $(N+2+2l)/(N-2)$ is
a solution of $N=f(p,\hmu)=2(p+l+1)/(p-1)$.

In addition, when $l\geq0$, we see that $p=p_c(l,\mu)$ is strictly
increasing in $l$ by the above argument.
Indeed, with the function $f=f(p,\mu,l)$ in (\ref{fn f}),
we differentiate both side of $f(p_c(l,\mu),\mu_+,l)=N$
with respect to $l$ and then obtain the desired result.

We complete the proof of Theorem \ref{thm-stable}.
\end{proof}


\section{Non-existence of finite Morse index solutions}
\setcounter{equation}{0}

In this section, we prove the non-existence of
nontrivial finite Morse index weak solutions of (\ref{H}) in $\Omega=\RN$, $\RN\setminus\{0\}$.
We shall apply Pohozaev's identity in the proof of Theorem \ref{thm-morse1}
and use the behavior of finite Morse index weak solutions
near the origin and infinity in the proof of Theorem \ref{thm-morse2}.

Let us start with some facts.
Suppose that a weak solution $u$ of (\ref{H}) has finite Morse index in $\RN\setminus\{0\}$.
Then $u$ is stable outside a compact subset of $\RN\setminus\{0\}$,
and thus there exists a $R_0>0$ such that
$u$ is stable in $\RN\setminus B_{R_0}(0)$.
By the similar way, $u$ is also stable in $B_{\ep_0}(0)\setminus\{0\}$ for some $\ep_0>0$.
Moreover, finite Morse index weak solutions of (\ref{H}) in $\RN$ obviously satisfy
the stability condition over a punctured ball and an exterior domain
(see also \cite[Proposition 2.1]{DDF}).
It leads to the non-existence result for nontrivial finite Morse index weak solutions
of (\ref{H}) in $\RN$ by the same argument as in Theorems \ref{thm-morse1} and \ref{thm-morse2}
(see Corollary \ref{cor-morse1} and the case $\Omega=\RN$ in Theorem \ref{thm-morse2}).

The next lemmas are crucial steps in the proofs of Theorems \ref{thm-morse1} and \ref{thm-morse2},
and here we use Proposition \ref{prop1}.

\begin{lem}\label{lem1}
Suppose that a weak solution $u$ of \eqref{H} in $\RN\setminus\{0\}$
is stable in $B_{\ep_0}(0)\setminus\{0\}$.
For any $\gam\in[1,\gam_M(p,\mu))$, we have that
\begin{enumerate}[\em (a)]
\item there exists a $\ep_{\ast}\in(0,\ep_0)$ such that
for any $\ep\in(0,\ep_{\ast}/2)$,
\begin{equation}\label{lem3.1-a}
\int_{\{\ep<|x|<\ep_{\ast}\}}
\left|\nabla \left(|u|^{\frac{\gamma-1}{2}}u\right)\right|^2
+ |x|^l |u|^{p+\gamma}\,dx
\le C_1+C_2\,\ep^{N-\frac{(\gamma+1)l+2(p+\gam)}{p-1}},
\end{equation}
where $C_1$ and $C_2$ are positive constants, independent of $\ep$;
\item for every open ball $B_{\rho}(y)$ with
$0<|y|<2\ep_0/3$ and $\rho=|y|/4$,
\begin{equation}\label{lem3.1-b}
\int_{B_{\rho}(y)} \left|\nabla \left(|u|^{\frac{\gamma-1}{2}}u\right)\right|^2
+ |x|^l |u|^{p+\gamma}\,dx
\le C\,\rho^{N-\frac{(\gamma+1)l+2(p+\gam)}{p-1}},
\end{equation}
where $C$ is a positive constant,
independent of $\rho$ and $y$.
\end{enumerate}
\end{lem}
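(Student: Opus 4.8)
The plan is to obtain both bounds directly from the integral inequality of Proposition~\ref{prop1}, applied on the domain $\Omega=B_{\ep_0}(0)\setminus\{0\}$ on which $u$ is assumed stable, by inserting into it cut-off functions adapted to the relevant ball or annulus. Fix $\gam\in[1,\gam_M(p,\mu))$ and an integer $m\ge\max\{\frac{p+\gam}{p-1},2\}$ once and for all, and write $\mathcal{E}(u):=|\nabla(|u|^{\frac{\gam-1}{2}}u)|^2+|x|^l|u|^{p+\gam}\ge 0$ for the integrand appearing in \eqref{lem3.1-a}--\eqref{lem3.1-b}. The one computational fact used repeatedly is that on an annulus $\{a<|x|<b\}$ with $b\le c\,a$ for a fixed constant $c$, the weight $|x|^{\frac{(\gam+1)l}{1-p}}$ is comparable to $a^{\frac{(\gam+1)l}{1-p}}$ with a constant depending only on $N,l,p,\gam$ (and $c$), irrespective of the sign of that exponent; likewise $|x|$ is comparable to $\rho$ on a ball $B_{\rho}(y)$ with $\rho=|y|/4$.

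\emph{Part (b).} Given $y$ with $0<|y|<2\ep_0/3$ and $\rho=|y|/4$, observe that $\overline{B_{2\rho}(y)}\subset\{|y|/2\le|x|\le 3|y|/2\}\subset B_{\ep_0}(0)\setminus\{0\}$, so the ball lies in the region of stability. Choose a standard cut-off $\psi\in C^2_c(B_{2\rho}(y))$ with $0\le\psi\le 1$, $\psi\equiv 1$ on $B_{\rho}(y)$, $|\nabla\psi|\le C/\rho$ and $|\Delta\psi|\le C/\rho^2$. Inserting $\psi$ into Proposition~\ref{prop1}, its left-hand side is at least $\int_{B_{\rho}(y)}\mathcal{E}(u)\,dx$, while on the right-hand side $(|\nabla\psi|^2+|\psi\Delta\psi|)^{\frac{p+\gam}{p-1}}\le C\rho^{-\frac{2(p+\gam)}{p-1}}$, the weight is $\le C\rho^{\frac{(\gam+1)l}{1-p}}$, and $|B_{2\rho}(y)|\le C\rho^N$. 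Multiplying the three bounds gives $\int_{B_{\rho}(y)}\mathcal{E}(u)\,dx\le C\rho^{N-\frac{(\gam+1)l+2(p+\gam)}{p-1}}$ with $C$ independent of $y$ and $\rho$, which is \eqref{lem3.1-b}.

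\emph{Part (a).} Set $\ep_{\ast}:=\ep_0/4$, so that $2\ep_{\ast}<\ep_0$, and for $\ep\in(0,\ep_{\ast}/2)$ build a radial cut-off $\psi=\psi_{\ep}\in C^2_c(B_{\ep_0}(0)\setminus\{0\})$ with $0\le\psi\le 1$, equal to $1$ on $\{\ep\le|x|\le\ep_{\ast}\}$, supported in $\{\ep/2\le|x|\le 2\ep_{\ast}\}$, and satisfying $|\nabla\psi|\le C/\ep$, $|\Delta\psi|\le C/\ep^2$ on the inner transition annulus $\{\ep/2<|x|<\ep\}$ and $|\nabla\psi|\le C/\ep_{\ast}$, $|\Delta\psi|\le C/\ep_{\ast}^2$ on the outer one $\{\ep_{\ast}<|x|<2\ep_{\ast}\}$; concretely $\psi_{\ep}(x)=\eta_0(|x|/\ep)\,\eta_1(|x|/\ep_{\ast})$ for fixed one-variable cut-offs. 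Its support lies in $\{\ep/2\le|x|\le\ep_0/2\}\subset B_{\ep_0}(0)\setminus\{0\}$, so Proposition~\ref{prop1} applies. Since $\psi_{\ep}\equiv 1$ on $\{\ep<|x|<\ep_{\ast}\}$, its left-hand side dominates $\int_{\{\ep<|x|<\ep_{\ast}\}}\mathcal{E}(u)\,dx$. On the right-hand side the integrand is supported in the two transition annuli: over $\{\ep_{\ast}<|x|<2\ep_{\ast}\}$ all quantities depend only on the fixed $\ep_{\ast}=\ep_0/4$, contributing some constant $C_1$; over $\{\ep/2<|x|<\ep\}$ it is bounded pointwise by $C\ep^{\frac{(\gam+1)l}{1-p}}\ep^{-\frac{2(p+\gam)}{p-1}}$ and the annulus has measure $\le C\ep^N$, contributing $C_2\,\ep^{N-\frac{(\gam+1)l+2(p+\gam)}{p-1}}$. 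Adding the two gives \eqref{lem3.1-a}.

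The argument is essentially routine once Proposition~\ref{prop1} is available; the only points needing care are (i) verifying that the supports of the cut-offs stay within the region of stability $B_{\ep_0}(0)\setminus\{0\}$ --- which is precisely why the thresholds $|y|<2\ep_0/3$ and $\ep_{\ast}<\ep_0/2$ enter --- and (ii) ensuring that all constants are independent of $\ep$ (resp.\ of $y,\rho$), which comes down to the scale comparability of $|x|^{\frac{(\gam+1)l}{1-p}}$ noted above, valid irrespective of the sign of that exponent. Note also that the exponent $N-\frac{(\gam+1)l+2(p+\gam)}{p-1}$ may have either sign, and the statements of (a) and (b) are phrased so as to accommodate that.
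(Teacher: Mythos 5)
Your proposal is correct and is essentially the argument the paper has in mind: the paper omits the proof of this lemma and refers to Step 1 of the proofs in \cite{DDG} (following \cite{F1}), which is exactly your procedure of feeding annulus- and ball-adapted cut-offs into Proposition \ref{prop1} and using the scale comparability of $|x|$ on the support of the cut-off. Your choices $\ep_{\ast}=\ep_0/4$, the support checks ($\overline{B_{2\rho}(y)}\subset B_{\ep_0}(0)\setminus\{0\}$ for $|y|<2\ep_0/3$, $\rho=|y|/4$), and the bookkeeping of the two transition annuli all check out, so nothing further is needed.
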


\begin{proof}
The proof is based on the argument introduced in \cite{F1}
and it is similar to that in \cite{DDG}
(see Step 1 of proofs for Theorems 2.1 and 2.2 in \cite{DDG}).
Thus we skip it here and refer to \cite{DDG,F1} for further details.
\end{proof}

\begin{lem}\label{lem2}
Suppose that a weak solution $u$ of \eqref{H} in $\RN\setminus\{0\}$
is stable in $\RN\setminus B_{R_0}(0)$.
For any $\gam\in[1,\gam_M(p,\mu))$, we have that
\begin{enumerate}[\em (a)]
\item for every $r > R_0+3$,
\begin{equation}\label{lem3.2-a}
\int_{\{R_0+2<|x|<r\}}
\left|\nabla \left(|u|^{\frac{\gamma-1}{2}}u\right)\right|^2
+ |x|^l |u|^{p+\gamma}\,dx
\le C_1+C_2\,r^{N-\frac{(\gamma+1)l+2(p+\gam)}{p-1}},
\end{equation}
where $C_1$ and $C_2$ are positive constants,
independent of $r$;
\item for every open ball $B_R(y)$ with $|y|\geq 2R_0$
and $R=|y|/4$,
\begin{equation}\label{lem3.2-b}
\int_{B_R(y)} \left|\nabla \left(|u|^{\frac{\gamma-1}{2}}u\right)\right|^2
+ |x|^l |u|^{p+\gamma}\,dx
\le C\,R^{N-\frac{(\gamma+1)l+2(p+\gam)}{p-1}},
\end{equation}
where $C$ is a positive constant, independent of $R$ and $y$.
\end{enumerate}
\end{lem}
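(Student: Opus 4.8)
The plan is to adapt, \emph{mutatis mutandis}, the proof of Lemma~\ref{lem1} --- which treats the region near the origin --- to the region near infinity, with Proposition~\ref{prop1} as the main tool. Since $u$ is a weak solution of \eqref{H} in $\RN\setminus\{0\}$ that is stable in $\RN\setminus B_{R_0}(0)$, it is in particular a stable weak solution of \eqref{H} in the open set $\Omega=\{x\in\RN:|x|>R_0\}$; hence Proposition~\ref{prop1} applies on $\Omega$ for the given $\gam\in[1,\gam_M(p,\mu))$ and for any integer $m\ge\max\{\tfrac{p+\gam}{p-1},2\}$ (such an $m$ exists, and $|\psi|^{2m}=1$ wherever the test function $\psi$ equals $1$). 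In both parts I would use that the integrand $|\nabla(|u|^{\frac{\gam-1}{2}}u)|^2+|x|^l|u|^{p+\gam}$ is nonnegative, so that restricting the region of integration on the left-hand side of the Proposition's estimate only decreases it.

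\textbf{Part (a).} Fix $r>R_0+3$ and choose $\psi=\psi_r\in C^2_c(\{R_0+1<|x|<2r\})$ with $0\le\psi\le1$, $\psi\equiv1$ on $\{R_0+2<|x|<r\}$, and with the two transition shells kept at different scales: on the inner shell $\{R_0+1\le|x|\le R_0+2\}$ one has $|\nabla\psi|+|\Delta\psi|\le C$, a bound independent of $r$, while on the outer shell $\{r\le|x|\le 2r\}$ one has $|\nabla\psi|\le C/r$ and $|\Delta\psi|\le C/r^2$. Feeding $\psi_r$ into Proposition~\ref{prop1} and splitting the right-hand side according to these two shells: the inner-shell contribution is $\le C_1$ for a constant $C_1$ independent of $r$ (there $|x|^{(\gam+1)l/(1-p)}$ is bounded and the shell has bounded volume), while the outer-shell contribution is, using $|x|^{(\gam+1)l/(1-p)}\sim r^{(\gam+1)l/(1-p)}$ on $\{r\le|x|\le 2r\}$, the volume bound $\le Cr^N$, and $(|\nabla\psi|^2+|\psi\Delta\psi|)^{(p+\gam)/(p-1)}\le Cr^{-2(p+\gam)/(p-1)}$, at most $C_2\,r^{N+(\gam+1)l/(1-p)-2(p+\gam)/(p-1)}=C_2\,r^{N-\frac{(\gam+1)l+2(p+\gam)}{p-1}}$. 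Restricting the left-hand side of the Proposition's estimate to $\{R_0+2<|x|<r\}$, where $|\psi_r|^{2m}=1$, yields \eqref{lem3.2-a}.

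\textbf{Part (b).} Given $y$ with $|y|\ge 2R_0$ and $R=|y|/4$, note first that $B_{3R/2}(y)\subset\{|x|>R_0\}$: for $x\in B_{3R/2}(y)$ one has $|x|\ge|y|-\tfrac32 R=\tfrac58|y|\ge\tfrac54 R_0>R_0$, and moreover $\tfrac52 R\le|x|\le\tfrac{11}{2}R$ there, so $|x|$ is comparable to $R$ uniformly in $y$. Choose $\psi\in C^2_c(B_{3R/2}(y))$ with $0\le\psi\le1$, $\psi\equiv1$ on $B_R(y)$, $|\nabla\psi|\le C/R$ and $|\Delta\psi|\le C/R^2$, and insert it into Proposition~\ref{prop1}. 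Using $|x|^{(\gam+1)l/(1-p)}\sim R^{(\gam+1)l/(1-p)}$ on $B_{3R/2}(y)$, the volume bound $|B_{3R/2}(y)|\le CR^N$, and $(|\nabla\psi|^2+|\psi\Delta\psi|)^{(p+\gam)/(p-1)}\le CR^{-2(p+\gam)/(p-1)}$, the right-hand side is at most $C\,R^{N-\frac{(\gam+1)l+2(p+\gam)}{p-1}}$ with $C$ independent of $R$ and $y$; since $|\psi|^{2m}=1$ on $B_R(y)$, this gives \eqref{lem3.2-b}.

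The only point requiring care --- rather than a genuine difficulty --- is the asymmetric construction of the cutoffs in part (a): the transition near the fixed sphere $|x|=R_0$ must be performed on a shell of \emph{fixed} width (hence with $r$-independent derivative bounds), which is what produces the constant $C_1$, whereas the transition near infinity must be performed on a shell of width $\sim r$, which is what produces the power $r^{N-\frac{(\gam+1)l+2(p+\gam)}{p-1}}$; one also keeps all supports strictly inside $\{|x|>R_0\}$ so that Proposition~\ref{prop1} is applicable. Everything else --- the exponent count, the comparability of $|x|$ to $r$ (resp.\ $R$) on annuli (resp.\ balls) far from the origin, and the admissible choice of $m$ --- is routine and parallels \cite{DDG,F1} and the proof of Lemma~\ref{lem1}.
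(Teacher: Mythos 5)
Your proposal is correct and is precisely the argument the paper has in mind: the paper itself omits the proof of this lemma, referring to Step 1 of the proof of Theorem 3.3 in \cite{DDG} and to \cite{F1}, and that argument is exactly your application of Proposition \ref{prop1} on $\{|x|>R_0\}$ with the asymmetric cutoff (fixed-width inner transition, scale-$r$ outer transition) for part (a) and the scaled ball cutoff with $|x|\sim R$ for part (b). The exponent bookkeeping and the uniformity of the constants in $r$, $R$, $y$ are handled correctly.
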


\begin{proof}
The proof is based on the argument introduced in \cite{F1}
and it is similar to that in \cite{DDG}
(see Step 1 of the proof for Theorem 3.3 in \cite{DDG}).
Thus we skip it here and refer to \cite{DDG,F1} for further details.
\end{proof}

\subsection{Proof of Theorem \ref{thm-morse1}}

Assume that $1<p<(N+2+2l)/(N-2)$.
Choosing $\gamma=1$ in Lemma \ref{lem2}
and using (\ref{thm2.2-condition}) we obtain
\begin{equation}\label{bdd}
\nabla u\in L^2(\RN) \qquad \textrm{and}\ \qquad \intr |x|^l|u|^{p+1}\,dx<\infty.
\end{equation}
Since $u\in C^2(\RN\setminus\{0\})$ by elliptic regularity theory,
we can apply Pohozaev identity to $u$ in $B_{\sigma,R}:=\{|x|\in\RN\,:\,\sigma<|x|<R\}$, and then we see that
\begin{equation}\label{pohozaev}
\begin{aligned}
&\frac{N+l}{p+1}\int_{B_{\sigma,R}}|x|^l|u|^{p+1}\,dx+
\frac{N-2}{2}\int_{B_{\sigma,R}}\mu|x|^{-2}u^2-|\nabla u|^2\,dx \\
&\ \ =\int_{\partial B_{\sigma,R}}\frac{(\nabla u\cdot x)^2}{|x|}-\frac{|\nabla u|^2|x|}{2}\,dS
+\frac{\mu}{2}\int_{\partial B_{\sigma,R}}|x|^{-1}u^2\,dS
+\frac{1}{p+1}\int_{\partial B_{\sigma,R}}|x|^{l+1}|u|^{p+1}\,dS.
\end{aligned}
\end{equation}
We will now show that the right hand side in \eqref{pohozaev} converges to $0$ for suitably chosen sequences
$\sigma_n\to0$ and $R_n\to+\infty$.
We first define
$$I(r):=\int_{\partial B_{r}}|\nabla u|^2+\mu|x|^{-2}u^2+
\frac{2}{p+1}|x|^l|u|^{p+1}\,dS.$$
Then by \eqref{bdd} and Hardy's inequality, we have that for any $c$ with $\sigma<c<R$,
$$\intr |\nabla u|^2+\mu|x|^{-2}u^2+\frac{2}{p+1}|x|^l|u|^{p+1}\,dx=
\int^{+\infty}_{c}I(R)\,dR +\int^{c}_{0}I(\sigma)\,d\sigma<+\infty.$$
Therefore there exist the sequences $\sigma_n\to0$ and $R_n\to+\infty$ such that
$\lim_{n\to\infty}\sigma_nI(\sigma_n)=0$ and $\lim_{n\to\infty}R_nI(R_n)=0$.
We derive from \eqref{pohozaev} (with the choice $\sigma=\sigma_n$, $R=R_n$ and $n\to+\infty$) that
\begin{equation}\label{pohozaev id}
\frac{N-2}{2}\intr|\nabla u|^2\,dx-\mu|x|^{-2}|u|^2\,dx
=\frac{l+N}{p+1}\intr|x|^l |u|^{p+1}\,dx.
\end{equation}

On the other hand, we consider the function $\vphi$ defined in the proof of Lemma \ref{lem1}
and set $\vphi_{R,0}(x):=\varphi(|x|/R)$.
Using $u\,\vphi_{R,0}$ as a test function in (\ref{def-wk sol})
and integrating by parts it follows that
\begin{equation}\label{thm2.2-1}
\intr\left(|\nabla u|^2-\mu|x|^{-2}|u|^2-|x|^l |u|^{p+1}\right)\vphi_{R,0}\,dx
=\frac{1}{2}\intr|u|^2\Delta\vphi_{R,0}\,dx.
\end{equation}
In particular, applying H\"{o}lder's inequality we find
\begin{equation*}
\begin{aligned}
\left|\intr |u|^2\Delta\vphi_{R,0}\,dx\right|
&\le\left(\intr|x|^l |u|^{p+1}\,dx\right)^{\frac{2}{p+1}}
\left(\int_{B_{2R}(0)\setminus B_R(0)}|x|^{-\frac{2l}{p-1}}
|\Delta\vphi_{R,0}|^{\frac{p+1}{p-1}}\,dx\right)^{\frac{p-1}{p+1}} \\
&\le CR^{N-{\frac{2(p+l+1)}{p-1}}}\rightarrow 0 \qquad \textrm{as}\ \ R\to+\infty
\end{aligned}
\end{equation*}
due to the assumption on $p$.
Then letting $R\to+\infty$ in (\ref{thm2.2-1}), we get
\begin{equation}\label{thm2.2-2}
\intr|\nabla u|^2-\mu|x|^{-2}|u|^2\,dx=\intr|x|^l |u|^{p+1}\,dx.
\end{equation}
Finally, combining (\ref{pohozaev id}) and (\ref{thm2.2-2}) we obtain
$$\left(\dfrac{N-2}{2}-\dfrac{l+N}{p+1}\right)\intr|x|^l |u|^{p+1}\,dx=0,$$
but $$\dfrac{N-2}{2}-\dfrac{l+N}{p+1}<0
\qquad \textrm{for} \ \ 1<p<\frac{N+2+2l}{N-2}.$$
Therefore we conclude that $u\equiv0$.

\subsection{Proof of Theorem \ref{thm-morse2}}

Suppose that a weak solution $u$ of (\ref{H}) has finite Morse index in $\RN\setminus\{0\}$.
Then we shall verify that $u$ is identically zero.
For the proof of Theorem \ref{thm-morse2},
the next lemmas which show the behavior of $u$ and $\nabla u$ near the origin and infinity
are needed.

\begin{lem}\label{lem3}
Suppose that a weak solution $u$ of \eqref{H} in $\RN\setminus\{0\}$
is stable in $B_{\ep_0}(0)\setminus\{0\}$.
Then for any $(N+2+2l)/(N-2)\leq p < p_c(l_-,\mu)$,
\begin{equation}\label{behavior zero}
\lim_{|x|\to 0}|x|^{\frac{l+2}{p-1}}|u(x)|=0
\end{equation}
and
\begin{equation}\label{grad behavior zero}
\lim_{|x|\to 0}|x|^{1+\frac{l+2}{p-1}}|\nabla u(x)|=0.
\end{equation}
\end{lem}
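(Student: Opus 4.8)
The plan is to establish the pointwise estimate \eqref{behavior zero} first and then deduce \eqref{grad behavior zero} from it. For the latter, once \eqref{behavior zero} is known, set $u_y(x):=|y|^{\frac{l+2}{p-1}}u(|y|x)$ for $y\neq0$ small; by the scale invariance of \eqref{H} this is again a (near the origin, stable) solution of \eqref{H} on the annulus $\{\frac12<|x|<2\}$, on which $\mu|x|^{-2}$ and $|x|^l$ are bounded, and \eqref{behavior zero} says precisely that $\|u_y\|_{L^{\infty}(\{\frac12<|x|<2\})}\to0$ as $y\to0$. Then the right-hand side of $-\Delta u_y=\mu|x|^{-2}u_y+|x|^l|u_y|^{p-1}u_y$ tends to $0$ uniformly, so interior elliptic estimates give $u_y\to0$ in $C^1$ of a smaller annulus; evaluating the gradient at $x=y/|y|$ yields \eqref{grad behavior zero}. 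Thus everything reduces to proving \eqref{behavior zero}.

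The first main step is the a priori bound $|x|^{\frac{l+2}{p-1}}|u(x)|\le C$ for $|x|$ small, which I would obtain by a doubling-and-rescaling argument as in \cite{DDG}. If the bound failed one could apply the doubling lemma (in $B_{\ep_0}(0)\setminus\{0\}$, with distance to the puncture as weight) to $M(x):=\big(|x|^l|u(x)|^{p-1}\big)^{1/2}$, whose reciprocal is the natural length scale of the nonlinearity, producing points $y_n\to0$ with $|y_n|M(y_n)\to+\infty$ and $M\le2M(y_n)$ on $B_{M(y_n)^{-1}}(y_n)$. The rescalings $v_n(z):=|u(y_n)|^{-1}u\big(y_n+M(y_n)^{-1}z\big)$ are then uniformly bounded with $|v_n(0)|=1$, and, by elliptic estimates, converge along a subsequence to an entire solution $v$ of $-\Delta v=|v|^{p-1}v$ in $\RN$: the rescaled Hardy term is of order $(|y_n|M(y_n))^{-2}\to0$, while the weight $|y_n+M(y_n)^{-1}z|^l\to|y_n|^l$ is normalized away by the choice of $M$. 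Stability is preserved under this rescaling, so $v$ is a bounded stable solution of the Lane--Emden equation with $v(0)\neq0$; since $1<p<p_c(l_-,\mu)\le p_c(0,0)$, Farina's Liouville theorem \cite{F1} forces $v\equiv0$, a contradiction. Hence $|x|^{\frac{l+2}{p-1}}|u(x)|\le C$ near the origin.

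To upgrade this to the decay \eqref{behavior zero} I would use a blow-up argument together with the energy estimates of Lemma \ref{lem1}. Take $\gam_0\in[1,\gam_M(p,\mu))$ to be the root of $N=\frac{(\gam_0+1)l+2(p+\gam_0)}{p-1}$; this choice is admissible because $p\ge\frac{N+2+2l}{N-2}$ gives $\gam_0\ge1$ and $p<p_c(l_-,\mu)\le p_c(l,\mu)$ gives $\gam_0<\gam_M(p,\mu)$. Since the exponent $N-\frac{(\gam_0+1)l+2(p+\gam_0)}{p-1}$ is zero, Lemma \ref{lem1}(a) yields $\int_{\{|x|<\ep_{\ast}\}}\big(|\nabla(|u|^{\frac{\gam_0-1}{2}}u)|^2+|x|^l|u|^{p+\gam_0}\big)\,dx<+\infty$, and the same vanishing exponent makes the rescaling $u_n(x):=\lambda_n^{\frac{l+2}{p-1}}u(\lambda_n x)$ carry the energy of $u$ over $\{\frac{\lambda_n}{4}<|x|<4\lambda_n\}$ onto the energy of $u_n$ over $\{\frac14<|x|<4\}$ with no prefactor. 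Now suppose $\limsup_{|x|\to0}|x|^{\frac{l+2}{p-1}}|u(x)|=2\delta>0$, pick $x_n\to0$ with $|x_n|^{\frac{l+2}{p-1}}|u(x_n)|\ge\delta$ and put $\lambda_n:=|x_n|$. By the a priori bound $(u_n)$ is uniformly bounded on $\{\frac14<|x|<4\}$, hence bounded in $C^{1,\alpha}_{loc}$, so $u_n\to u_\infty$ in $C^1_{loc}(\{\frac14<|x|<4\})$ along a subsequence, $u_\infty$ solving \eqref{H} there. On the other hand the energy of $u_n$ over $\{\frac14<|x|<4\}$ tends to $0$, being a tail of the finite integral above, so $\int_{\{\frac14<|x|<4\}}|x|^l|u_\infty|^{p+\gam_0}\,dx=0$ and $u_\infty\equiv0$. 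This contradicts $|u_\infty(\omega_0)|=\lim_n|u_n(x_n/|x_n|)|=\lim_n|x_n|^{\frac{l+2}{p-1}}|u(x_n)|\ge\delta$, where $\omega_0:=\lim_n x_n/|x_n|\in S^{N-1}$. Therefore \eqref{behavior zero} holds.

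The hard part is the a priori scaling bound: it is there that one needs $p<p_c(0,0)$ to kill the Lane--Emden blow-up limit, and the interplay of this with the exponent $\gam_0$ in the last step is exactly what pins down the admissible range $\frac{N+2+2l}{N-2}\le p<p_c(l_-,\mu)$; the appearance of $l_-=\min\{l,0\}$ reflects that near the origin the weight $|x|^l$ is comparable to a constant when $l\ge0$, so the relevant threshold degenerates to the unweighted one, whereas for $l<0$ it is genuinely $p_c(l,\mu)$. Once the a priori bound is available, the finiteness of the energy near the origin (Lemma \ref{lem1}) and the blow-up limit argument are routine.
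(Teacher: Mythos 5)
Your proposal is correct in substance, but for the key decay estimate \eqref{behavior zero} it takes a genuinely different route from the paper. The paper performs no blow-up and invokes no Liouville theorem: it chooses the critical exponent $\gam_{\ast}$ with $N(p-1)=(\gam_{\ast}+1)l+2(p+\gam_{\ast})$ exactly as you do, but then uses Lemma \ref{lem1}(b) and H\"older's inequality (treating $-2<l<0$ and $l\geq0$ separately, which is precisely where $l_-$ enters there) to show that the coefficient $d(x)=\mu|x|^{-2}+|x|^l|u|^{p-1}$ lies in $L^{N/(2-\kappa)}(B_{2\rho}(y))$ with $\rho^{\kappa}\|d\|_{L^{N/(2-\kappa)}(B_{2\rho}(y))}$ bounded uniformly in $y$, $\rho=|y|/8$; Serrin's local estimate \cite{S} then gives $\|u\|_{L^{\infty}(B_{\rho}(y))}\leq C_s\rho^{-N/2}\|u\|_{L^{2}(B_{2\rho}(y))}$ with a uniform constant, and one further H\"older step converts the smallness of $\int_{\{|x|<\delta\}}|x|^l|u|^{p+\gam_{\ast}}\,dx$ directly into $|y|^{\frac{l+2}{p-1}}|u(y)|\leq C\eta$, so the decay is obtained in one pass, without first proving a universal bound. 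Your two-step scheme --- a universal bound via doubling/rescaling to a bounded stable entire Lane--Emden solution killed by Farina's theorem \cite{F1}, followed by the scale-invariant energy argument based on Lemma \ref{lem1}(a) and the identity $N+l=\frac{(l+2)(p+\gam_0)}{p-1}$ --- is a legitimate alternative: the scale invariance you exploit is exactly right, the rescaled Hardy term does vanish because $|y_n|M(y_n)\to\infty$ is the negation of the bound, and $p<p_c(l_-,\mu)\leq p_c(0,\mu)\leq p_c(0,0)$ (by the monotonicity of $f$ in $l$ and of $p_c$ in $\mu$ from the proof of Theorem \ref{thm-stable}) ensures both that Farina's theorem applies and that your $\gam_0$ exists; in your version this is where $l_-$ enters, replacing the paper's two-case H\"older computation. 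What the paper's route buys is self-containedness (only Serrin's classical theorem; the doubling lemma of Pol\'a\v{c}ik, Quittner and Souplet is not among its tools) and a quantitative estimate in terms of the local integral; what yours buys is a more conceptual argument that avoids the uniform $L^{N/(2-\kappa)}$ estimate. Two details you should state carefully for the blow-up step: the doubling lemma must be used so that $M\leq 2M(y_n)$ holds on balls $B_{k_nM(y_n)^{-1}}(y_n)$ with $k_n\to\infty$ (doubling on radius $M(y_n)^{-1}$ alone only yields a stable solution on $B_1$, to which no Liouville theorem applies), and the stability of the limit on all of $\RN$ uses $M(y_n)\,\mathrm{dist}\bigl(y_n,\{0\}\cup\partial B_{\ep_0}\bigr)\to\infty$, which that lemma provides; with these standard precisions your argument goes through. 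Your reduction of \eqref{grad behavior zero} to \eqref{behavior zero} by rescaling and interior elliptic estimates coincides with the paper's final step.
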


\begin{proof} We divide the proof into four steps.
\\
\noindent\emph{Step 1.} We consider the function
$$H(p,\gam,l):=N(p-1)-(\gam+1)l-2(p+\gam).$$
Recalling the proof of Theorem \ref{thm-stable} we have
$$H(p,1,l)\geq0 \qquad \textrm{for}\ \ p\geq\dfrac{N+2+2l}{N-2}$$
and
$$H(p,\gam_M(p,\mu),l)<0 \qquad \textrm{for}\ \ 1<p<p_c(l,\mu).$$
Then there is a $\gam_{\ast}\in[1,\gam_M(p,\mu))$ such that
\begin{equation}\label{gam star}
H(p,\gam_{\ast},l)=0 \qquad \textrm{for}\ \ \dfrac{N+2+2l}{N-2}\leq p<p_c(l,\mu).
\end{equation}
From (a) in Lemma \ref{lem1}, there is a $\ep_{\ast}\in(0,\ep_0)$
such that for every $\ep\in(0,\ep_{\ast}/2)$,
$$\int_{\{\ep<|x|<\ep_{\ast}\}} |x|^l |u|^{p+\gam_{\ast}}\,dx
\le C_1+C_2\,\ep^{N-\frac{(\gam_{\ast}+1)l+2(p+\gam_{\ast})}{p-1}}=C_1+C_2,$$
and thus
$$\int_{B_{\ep_{\ast}}(0)}|x|^l |u|^{p+\gam_{\ast}}\,dx<+\infty.$$
This implies that for any $\eta>0$,
there exists a $\delta=\delta(p,N,\eta,u,l)<\ep_{\ast}$ such that
\begin{equation}\label{int near zero}
\int_{\{|x|<\delta\}}|x|^l |u|^{p+\gam_{\ast}}\,dx<\eta
\end{equation}
for $(N+2+2l)/(N-2)\leq p < p_c(l,\mu)$.
\\
\emph{Step 2. For any $(N+2+2l)/(N-2)\leq p < p_c(l_-,\mu)$,
there exists a small $\kappa_0=\kappa_0(p,N,l)>0$ such that
for every $\kappa\in(0,\kappa_0]$ and open ball $B_{2\rho}(y)$
with $0<|y|<2\ep_0/3$ and $\rho=|y|/8$,
\begin{equation}\label{lemma4.3-step1}
\big{\|}|x|^{-2}\big{\|}_{L^{\frac{N}{2-\kappa}}(B_{2\rho}(y))}
+\big{\|}|x|^l |u|^{p-1}\big{\|}_{L^{\frac{N}{2-\kappa}}(B_{2\rho}(y))} \leq C\,\rho^{-\kappa},
\end{equation}
where $C$ is a positive constant, independent of $y$ and $\rho$.}

For any $y\in\RN$ with $0<|y|<2\ep_0/3$ and $|y|=8\rho$,
a direct calculation gives that
there exists a positive constant $C_1$ such that
\begin{equation}\label{thm2.3-step1-1}
\int_{B_{2\rho}(y)}\left(|x|^{-2}\right)^{\frac{N}{2-\kappa}}\,dx
\leq C_1 \rho^{N-\frac{2N}{2-\kappa}}
\end{equation}
for every $\kappa>0$ sufficiently small.

We turn to the $L^{\frac{N}{2-\kappa}}$-boundedness of $|x|^l|u|^{p-1}$,
and the cases $-2<l<0$ and $l\geq 0$ will be treated separately.

\noindent \textbf{Case 1) $-2<l<0$:} Since $H$ is strictly decreasing in $l$,
we deduce from (\ref{gam star}) that for any $l\in(-2,0)$,
$$H(p,\gam_{\ast},0)<H(p,\gam_{\ast},l)=0
\qquad \textrm{for}\ \ \dfrac{N+2+2l}{N-2}\leq p<p_c(l,\mu),$$
or equivalently $$\frac{N(p-1)}{2}<p+\gam_{\ast}.$$
Then there is a $\kappa_0>0$ sufficiently small such that
$$\dfrac{N(p-1)}{2-\kappa}<p+\gam_{\ast}
\qquad \textrm{for every}\ \ \kappa \in (0,\kappa_0],$$
and therefore there exists a $\xi=\xi(\kappa)>1$ such that
$$\dfrac{N(p-1)}{2-\kappa}\xi=p+\gam_{\ast}
\qquad \textrm{for every}\ \ \kappa \in (0,\kappa_0].$$
Applying H\"{o}lder's inequality together with (b) in Lemma \ref{lem1}
we have that for any $(N+2+2l)/(N-2)\leq p<p_c(l,\mu)$,
\begin{eqnarray}\label{p int near zero1}
\int_{B_{2\rho}(y)}(|x|^l |u|^{p-1})^{\frac{N}{2-\kappa}}\,dx
&\leq& \left(\int_{B_{2\rho}(y)}|x|^l |u|^{p+\gam_{\ast}}\,dx\right)^{\frac{1}{\xi}}
\left(\int_{B_{2\rho}(y)}|x|^{\frac{l}{\xi-1}\left(\frac{N\xi}{2-\kappa}-1\right)}
\,dx\right)^{\frac{\xi-1}{\xi}} \nonumber \\
&=& \left(\int_{B_{2\rho}(y)}|x|^l |u|^{p+\gam_{\ast}}\,dx\right)^{\frac{1}{\xi}}
\left(\int_{B_{2\rho}(y)}|x|^{\frac{l}{\xi-1}\left(\frac{\gam_{\ast}+1}{p-1}\right)}
\,dx\right)^{\frac{\xi-1}{\xi}} \nonumber \\
&\leq& C_2\,\rho^{\frac{N}{\xi}-\frac{(\gam_{\ast}+1)l+2(p+\gam_{\ast})}{(p-1)\xi}
+N\left(1-\frac{1}{\xi}\right)+\frac{(\gam_{\ast}+1)l}{(p-1)\xi}}
= C_2\,\rho^{N-\frac{2(p+\gam_{\ast})}{(p-1)\xi}} \nonumber\\
&=& C_2\,\rho^{N-\frac{2N}{2-\kappa}}
\end{eqnarray}
for some positive constant $C_2$ independent of $\rho$.

\noindent \textbf{Case 2) $l\geq 0$:} We observe
$$H(p,\gam_M(p,\mu),0)<0 \qquad \textrm{for}\ \ 1<p<p_c(0,\mu).$$
Then by the continuity of $H$ in $\gam$,
there is a $\gam_0=\gam_0(p,\mu)\in(1,\gam_M(p,\mu))$ such that
$$H(p,\gam_0,0)<0 \qquad \textrm{for}\ \ 1<p<p_c(0,\mu).$$
This implies
$$\dfrac{N(p-1)}{2}<p+\gam_0
\qquad \textrm{for}\ \ \dfrac{N+2+2l}{N-2}\leq p<p_c(0,\mu),$$
thus there is a $\kappa_0>0$ sufficiently small such that
$$\frac{N(p-1)}{2-\kappa}<p+\gam_0 \qquad \textrm{for every}\ \ \kappa \in (0,\kappa_0].$$
Choosing $\xi=\xi(\kappa)>1$ such that
$$\dfrac{N(p-1)}{2-\kappa}\xi=p+\gam_0
\qquad \textrm{for every}\ \ \kappa \in (0,\kappa_0],$$
we obtain from H\"{o}lder's inequality and (b) in Lemma \ref{lem1} that
for any $(N+2+2l)/(N-2)\leq p<p_c(0,\mu)$,
\begin{eqnarray}\label{p int near zero2}
\int_{B_{2\rho}(y)}(|x|^l |u|^{p-1})^{\frac{N}{2-\kappa}}\,dx
&\leq& \left(\int_{B_{2\rho}(y)}|x|^l |u|^{p+\gam_0}\,dx\right)^{\frac{1}{\xi}}
\left(\int_{B_{2\rho}(y)}|x|^{\frac{l}{\xi-1}\left(\frac{N\xi}{2-\kappa}-1\right)}
\,dx\right)^{\frac{\xi-1}{\xi}} \nonumber \\
&\leq& C_2\,\rho^{\frac{N}{\xi}-\frac{(\gam_0+1)l+2(p+\gam_0)}{(p-1)\xi}
+N\left(1-\frac{1}{\xi}\right)+\frac{(\gam_0+1)l}{(p-1)\xi}} \nonumber\\
&=& C_2\,\rho^{N-\frac{2N}{2-\kappa}}.
\end{eqnarray}
Hence we deduce (\ref{lemma4.3-step1}) from (\ref{thm2.3-step1-1})--(\ref{p int near zero2}).
\\
\emph{Step 3. The behavior of $u$ and $\nabla u$ near the origin.}

Assume $(N+2+2l)/(N-2)\leq p<p_c(l_-,\mu)$.
We regard $u=u(x)$ as solution of the linear equation:
\begin{equation}\label{linear eq}
\Delta u+d(x)u=0 \qquad \textrm{in}\ \ B_{2\rho}(y),
\end{equation}
where $d(x)=\mu|x|^{-2}+|x|^l |u|^{p-1}\in L^{\frac{N}{2-\kappa}}(B_{2\rho}(y))$
by (\ref{lemma4.3-step1}) of Step 1.
Then the well-known result \cite[Theorem 1]{S} say that
every solution $u$ of (\ref{linear eq}) satisfies
\begin{equation}\label{norm on ball}
\|u\|_{L^\infty(B_{\rho}(y))}\le C_s \rho^{-\frac{N}{2}}\|u\|_{L^2(B_{2\rho}(y))}
\end{equation}
and
\begin{equation}\label{grad norm on ball}
\|\nabla u\|_{L^2(B_{\rho}(y))}\le C_s\rho^{-1}\|u\|_{L^2(B_{2\rho}(y))},
\end{equation}
where $C_s$ is a positive constant depending on $p$, $N$ and also on
$$\rho^\kappa\|d\|_{L^{\frac{N}{2-\kappa}}(B_{2\rho}(y))}.$$
In particular, by (\ref{lemma4.3-step1}) we have
$$\rho^\kappa\|d\|_{L^{\frac{N}{2-\kappa}}(B_{2\rho}(y))}\leq C$$
for some positive constant $C$ independent of $\rho$ and $y$,
and therefore the constant $C_s$ in (\ref{norm on ball}) and (\ref{grad norm on ball})
is independent of both $y$ and $\rho$.

On the other hand, we recall that for any $\eta>0$,
there exists a $\delta>0$ satisfying (\ref{int near zero})
for $(N+2+2l)/(N-2)\leq p<p_c(l_-,\mu)$;
here we use the fact that $p=p_c(l,\mu)$ is strictly increasing in $l\geq0$.
Choose any $y\in\RN$ with $0<|y|<\delta/10$ and $\rho=|y|/8$.
Using (\ref{norm on ball}) and H\"{o}lder's inequality
together with $\gam_{\ast}$ satisfying (\ref{gam star}), it follows that
\begin{equation*}
\begin{aligned}
\|u\|_{L^\infty(B_{\rho}(y))}
&\le C_s\rho^{-\frac{N}{2}}\|u\|_{L^2(B_{2\rho}(y))} \\
&\le C_s \rho^{-\frac{N}{2}}\left(\int_{B_{2\rho}(y)}|x|^l |u|^{p+\gam_{\ast}}\,dx\right)^{\frac{1}{p+\gam_{\ast}}}
\left(\int_{B_{2\rho}(y)}|x|^{-\frac{2l}{p+\gam_{\ast}-2}}\,dx\right)^{\frac{p+\gam_{\ast}-2}{2(p+\gam_{\ast})}} \\
&\le C_1 \rho^{-\frac{l+N}{p+\gam_{\ast}}}
\left(\int_{B_{2\rho}(y)}|x|^l |u|^{p+\gam_{\ast}}\,dx\right)^{\frac{1}{p+\gam_{\ast}}}
\end{aligned}
\end{equation*}
for some positive constant $C_1$ independent of $y$.
We observe that $\gam_{\ast}$ satisfies $$\dfrac{l+N}{p+\gam_{\ast}}=\dfrac{l+2}{p-1}$$
from the fact that $H(p,\gam_{\ast},l)=0$.
Then recalling that $\rho=|y|/8$ and (\ref{int near zero}) holds true, we have
$$|u(y)|\le\|u\|_{L^\infty(B_{\rho}(y))}
\le C_2 |y|^{-\frac{l+2}{p-1}}
\left(\int_{B_{2\rho}(y)}|x|^l |u|^{p+\gam_{\ast}}\,dx\right)^{\frac{1}{p+\gam_{\ast}}}
\le C_3|y|^{-\frac{l+2}{p-1}}\eta$$
for some positive constant $C_3$.
In other words, for any $\eta>0$, there exists a $m=\delta/10$ such that
\begin{equation}\label{behavior zero1}
|y|^{\frac{l+2}{p-1}}|u(y)|\leq C_3\,\eta \qquad \textrm{if}\ \ 0<|y|<m,
\end{equation}
which implies (\ref{behavior zero}).

Finally, by \eqref{behavior zero1}, we see that
$$-\Delta u(y)=\mu|x|^{-2}u+|x|^l|u|^{p-1}u=o\left(|y|^{-\frac{l+2}{p-1}-2}\right) \qquad\textrm{as}\ \ |y|\to0.$$
The scaling argument and standard elliptic theory imply
$$|\nabla u(y)|=o\left(|y|^{-\frac{l+2}{p-1}-1}\right) \qquad\textrm{as}\ \ |y|\to0.$$

We complete the proof of Lemma \ref{lem3}.
\end{proof}

\begin{lem}\label{lem4}
Suppose that a weak solution $u$ of \eqref{H} in $\RN\setminus\{0\}$
is stable in $\RN\setminus B_{R_0}(0)$.
Then for any $(N+2+2l)/(N-2)\leq p < p_c(l_-,\mu)$,
\begin{equation}\label{behavior infty}
\lim_{|x|\to +\infty}|x|^{\frac{l+2}{p-1}}|u(x)|=0
\end{equation}
and
\begin{equation}\label{grad behavior infty}
\lim_{|x|\to +\infty}|x|^{1+\frac{l+2}{p-1}}|\nabla u(x)|=0.
\end{equation}
\end{lem}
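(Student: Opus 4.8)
\textbf{Proof proposal for Lemma \ref{lem4}.}

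The plan is to mirror the proof of Lemma \ref{lem3}, replacing the analysis near the origin by analysis near infinity and using part (b) of Lemma \ref{lem2} in place of part (b) of Lemma \ref{lem1}. First, using the same exponent function $H(p,\gam,l)=N(p-1)-(\gam+1)l-2(p+\gam)$, I would pick $\gam_{\ast}\in[1,\gam_M(p,\mu))$ with $H(p,\gam_{\ast},l)=0$ (when $-2<l<0$, so that $l_-=l$) and, when $l\geq 0$, pick instead an auxiliary $\gam_0\in(1,\gam_M(p,\mu))$ with $H(p,\gam_0,0)<0$; this is possible exactly as in Steps 1--2 of Lemma \ref{lem3}, because the hypothesis $p<p_c(l_-,\mu)$ is the same. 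From part (a) of Lemma \ref{lem2} with $\gam=\gam_{\ast}$ one gets $\int_{\RN\setminus B_{R_0+2}(0)}|x|^l|u|^{p+\gam_{\ast}}\,dx<+\infty$ since the exponent $N-\frac{(\gam_{\ast}+1)l+2(p+\gam_{\ast})}{p-1}$ vanishes; hence for every $\eta>0$ there is $M>R_0$ with $\int_{\{|x|>M\}}|x|^l|u|^{p+\gam_{\ast}}\,dx<\eta$.

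Next I would establish the scale-invariant bound
\begin{equation*}
\big\||x|^{-2}\big\|_{L^{\frac{N}{2-\kappa}}(B_{2R}(y))}
+\big\||x|^l|u|^{p-1}\big\|_{L^{\frac{N}{2-\kappa}}(B_{2R}(y))}\leq C\,R^{-\kappa}
\end{equation*}
for every ball $B_{2R}(y)$ with $|y|\geq 2R_0$ and $R=|y|/8$, with $C$ independent of $y$ and $R$ and $\kappa>0$ small. The term $|x|^{-2}$ is harmless here since $|x|\sim|y|\sim R$ on such balls. For $|x|^l|u|^{p-1}$ I would apply H\"older's inequality with the exponent $\xi>1$ determined by $\frac{N(p-1)}{2-\kappa}\xi=p+\gam_{\ast}$ (resp. $=p+\gam_0$ when $l\geq 0$) and part (b) of Lemma \ref{lem2}, noting that on $B_{2R}(y)$ one has $|x|\sim R$ so the power-of-$|x|$ factors contribute only $R$-powers; the exponents combine exactly as in \eqref{p int near zero1}--\eqref{p int near zero2} to give $C_2 R^{N-\frac{2N}{2-\kappa}}$, which is the claimed bound raised to the power $\frac{2-\kappa}{N}$.

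Then, viewing $u$ as a solution of $\Delta u+d(x)u=0$ in $B_{2R}(y)$ with $d=\mu|x|^{-2}+|x|^l|u|^{p-1}$, Serrin's estimate \cite[Theorem 1]{S} gives $\|u\|_{L^\infty(B_R(y))}\leq C_s R^{-N/2}\|u\|_{L^2(B_{2R}(y))}$ and $\|\nabla u\|_{L^2(B_R(y))}\leq C_s R^{-1}\|u\|_{L^2(B_{2R}(y))}$ with $C_s$ controlled by $R^\kappa\|d\|_{L^{N/(2-\kappa)}(B_{2R}(y))}\leq C$, hence uniform in $y,R$. Applying H\"older once more, using $\frac{l+N}{p+\gam_{\ast}}=\frac{l+2}{p-1}$, and the smallness $\int_{\{|x|>M\}}|x|^l|u|^{p+\gam_{\ast}}\,dx<\eta$, one gets $|y|^{\frac{l+2}{p-1}}|u(y)|\leq C_3\eta$ for $|y|$ large, which is \eqref{behavior infty}. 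Finally \eqref{behavior infty} gives $-\Delta u=o(|x|^{-\frac{l+2}{p-1}-2})$ at infinity, and the rescaling $u\mapsto u(|y|\,\cdot)$ together with interior gradient estimates yields \eqref{grad behavior infty}. The main obstacle, just as in Lemma \ref{lem3}, is the bookkeeping in Step 2: one must verify the strict inequality $\frac{N(p-1)}{2}<p+\gam_{\ast}$ (equivalently $H(p,\gam_{\ast},0)<0$ when $l<0$, using strict monotonicity of $H$ in $l$), resp. produce the auxiliary $\gam_0$ when $l\ge 0$, so that $\xi>1$ exists and the H\"older split closes — this is exactly where the hypothesis $p<p_c(l_-,\mu)$ (rather than $p<p_c(l,\mu)$) is used.
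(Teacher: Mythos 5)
Your proposal is correct, but it follows a genuinely different route from the paper. The paper proves Lemma \ref{lem4} by a Kelvin transformation $v(x)=|x|^{2-N}u(x/|x|^2)$, which converts the behavior of $u$ at infinity into the behavior at the origin of a stable solution $v$ of $\Delta v+\mu|x|^{-2}v+|x|^m|v|^{p-1}v=0$ with $m=(N-2)(p-1)-(l+4)>-2$; the decay \eqref{behavior infty}--\eqref{grad behavior infty} is then read off from the known behavior of $v$ near $0$, with the preservation of stability under the transform quoted from \cite{WY,WY0} and \cite[Proposition 1.6]{DG1}. You instead redo the Lemma \ref{lem3} machinery directly at infinity: Lemma \ref{lem2}(a) with $\gamma_\ast$ solving $H(p,\gamma_\ast,l)=0$ gives $\int_{\{|x|>R_0+2\}}|x|^l|u|^{p+\gamma_\ast}\,dx<\infty$ and hence smallness of tails; Lemma \ref{lem2}(b), H\"older with $\xi$ defined by $\frac{N(p-1)}{2-\kappa}\xi=p+\gamma_\ast$ (or $p+\gamma_0$ when $l\ge0$) and the fact that $|x|\sim|y|\sim R$ on $B_{2R}(y)$ give the scale-invariant $L^{N/(2-\kappa)}$ bound on $d=\mu|x|^{-2}+|x|^l|u|^{p-1}$; Serrin's estimate plus the identity $\frac{l+N}{p+\gamma_\ast}=\frac{l+2}{p-1}$ then yield \eqref{behavior infty}, and the scaling/elliptic-estimate step gives \eqref{grad behavior infty}. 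Your exponent bookkeeping closes exactly as in \eqref{p int near zero1}--\eqref{p int near zero2}, and the roles of the two hypotheses ($p\ge\frac{N+2+2l}{N-2}$ for the existence of $\gamma_\ast$, $p<p_c(l_-,\mu)$ for $\xi>1$, using monotonicity of $H$ in $l$ when $l<0$ and the auxiliary $\gamma_0$ when $l\ge0$) are used correctly. What each approach buys: the paper's Kelvin argument is very short, but it cannot simply quote Lemma \ref{lem3} for $v$ — the transform flips the position of $p$ relative to the critical exponent of the new weight ($p\ge\frac{N+2+2l}{N-2}$ becomes $p\le\frac{N+2+2m}{N-2}$) — so it leans on external results of \cite{WY,WY0,DG1} for the transformed (subcritical-type) problem and for the invariance of stability; your direct argument is longer but self-contained, resting only on Lemma \ref{lem2}, Serrin's theorem and the paper's own exponent analysis, and it is in fact the scheme used in \cite{DDG,F1} for the exterior-domain case.
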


\begin{proof}
We use Kelvin transformation as in \cite{WY, WY0} by letting
$$v(x)=|x|^{2-N}u\left(\frac{x}{|x|^2}\right)\qquad\textrm{for}\ \ |x|>0\ \ \textrm{small}.$$
Then $v$ satisfies
$$\Delta v+|x|^{-2}v+|x|^m|v|^{p-1}v=0 \qquad\textrm{in}\ \ B_r(0)$$
for small $r>0$, where $m:=(N-2)(p-1)-(l+4)>-2$ since $p>(N+l)/(N-2)$.
Hence we can see that $v$ is a stable weak solution in $B_r(0)\setminus\{0\}$
because $u$ is stable near infinity (see \cite[Proposition 1.6]{DG1} for details).
Therefore we get that
$$|u(x)|=|x|^{2-N}\left|v\left(\frac{x}{|x|^2}\right)\right|=o\left(|x|^{-\frac{l+2}{p-1}}\right)
\qquad\textrm{as}\ \ |x|\to+\infty$$
and $$|\nabla u(x)|=o\left(|x|^{-1-\frac{l+2}{p-1}}\right)
\qquad\textrm{as}\ \ |x|\to+\infty.$$
\end{proof}

When $p=\frac{N+2+2l}{N-2}$, we obtain (\ref{thm2.3-limit1}) and (\ref{thm2.3-limit2})
from Lemma \ref{lem3} and Lemma \ref{lem4}.

Now suppose that $\frac{N+2+2l}{N-2}<p<p_c(l_-,\mu)$.
By the change of variable used in \cite{GS}, we let
$$u(r,\sigma)=r^{-\frac{l+2}{p-1}}v(t,\sigma),\qquad t=\ln{r}.$$
Then $v$ is a solution of the following equation:
$$v_{tt}+A v_t+\Delta_{S^{N-1}}v+Bv+|v|^{p-1}v=0 \qquad \textrm{in}\ \ \mathbb{R}\times S^{N-1}$$
with $$A=N-2-2\,\frac{l+2}{p-1} \qquad \textrm{and} \qquad
B=-\frac{l+2}{p-1}\left(N-2-\frac{l+2}{p-1}\right) +\mu,$$
where $\Delta_{S^{N-1}}$ denotes the Laplace-Beltrami operator on $S^{N-1}$.

Setting
$$E(w):=\int_{S^{N-1}}\left(\frac{1}{2}|\nabla_{S^{N-1}}w|^2
-\frac{B}{2}|w|^2-\frac{1}{p+1}|w|^{p+1}\right)\,d\sigma,$$
we see that
\begin{equation}\label{id1}
A\int_{S^{N-1}}{v_t}^2\,d\sigma
=\frac{d}{dt}\left(E(v)(t)-\frac{1}{2}\int_{S^{N-1}}{v_t}^2\,d\sigma\right).
\end{equation}
Integrating both side of (\ref{id1}) with respect to $t$,
we have that for all $s>0$,
\begin{equation}
\begin{aligned}\label{id2}
A&\int^{s}_{-s}\int_{S^{N-1}}{v_t}^2\,d\sigma\,dt \\
&=E(v)(s)-E(v)(-s)-\frac{1}{2}\int_{S^{N-1}}{v_t}^2(s,\sigma)\,d\sigma
+\frac{1}{2}\int_{S^{N-1}}{v_t}^2(-s,\sigma)\,d\sigma.
\end{aligned}
\end{equation}
The decay estimates in Lemma \ref{lem3} and Lemma \ref{lem4} yield that
\begin{equation}\label{decay est}
\lim_{t\to \pm\infty}v(t,\sigma)=0
\qquad \textrm{and} \qquad
\lim_{t\to \pm\infty}|v_t(t,\sigma)|=\lim_{t\to \pm\infty}|\nabla_{S^{N-1}}v(t,\sigma)|=0,
\end{equation}
where the limits are uniform with respect to $\sigma\in S^{N-1}$.
Then we obtain from (\ref{decay est}) that
$$A\int^{+\infty}_{-\infty}\int_{S^{N-1}}{v_t}^2\,d\sigma\,dt=0,$$
which implies that  $v=v(\sigma)$ (since $A\neq0$).
Using the fact that $v(t,\sigma)\to 0$ as $t\to+\infty$,
we conclude that $v\equiv0$,
and therefore $u\equiv0$.


\section{Existence of a stable solution}
\setcounter{equation}{0}

This section is devoted to the existence of a nontrivial stable weak solution of (\ref{H}) in $\RN$,
that is, the proof of Theorem \ref{thm-exist}.
 {
In \cite{Bae}, the author shows that when $(\mu,p)\in S$, the following equation
$$u''+(N-1)r^{-1}u'+\mu r^{-2}u+r^l |u|^{p-1}u=0,
\qquad \lim_{r\to 0^+}r^{\sqrt{\hmu}-\sqrt{\hmu-\mu}}\,u(r)=\lambda>0$$
has one parameter family of stable regular solutions (see Theorems 3.4 and 3.8 in \cite{Bae}).
The ideas in \cite{Bae0,Bae} are to make use of suitable transformations. The proof for the stability is brief also.
For readers' convenience, we provide a direct proof of Theorem \ref{thm-exist} and moreover explain the stability in detail.}

In view of Theorem \ref{thm-stable}, we need only to consider the case $p\geq p_c(l,\mu)$.
We shall derive some results relevant to positive radial solutions of (\ref{H}).
These results immediately lead to the conclusion of Theorem \ref{thm-exist}.
Hence we write the equation (\ref{H}) in the following radial version:
\begin{equation}\label{radial eq}
\left\{ \begin{array}{ll}
u''+(N-1)r^{-1}u'+\mu r^{-2}u+r^l |u|^{p-1}u=0,\\
\lim_{r\to0^+} r^{\frac{l+2}{p-1}}u(r)=0,\\
\lim_{r\to0^+}(r^{\frac{l+2}{p-1}}u(r))'=0,
\end{array}\right.
\end{equation}
where $r=|x|$.
We first show that a solution of (\ref{radial eq}) in $(0,+\infty)$ exists.

\begin{prop}\label{prop2}
If $(\mu,p)\in S$, then the equation \eqref{radial eq} possesses a positive radial solution in $(0,+\infty)$.
\end{prop}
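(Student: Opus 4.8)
The plan is to reduce the radial problem \eqref{radial eq} to an autonomous second order ODE by the Emden--Fowler substitution used in the proof of Theorem \ref{thm-morse2}, and then to read off the desired solution from the phase portrait of the resulting planar system. Set $\beta:=\frac{l+2}{p-1}$ and write $u(r)=r^{-\beta}v(t)$ with $t=\ln r$; then the ODE in \eqref{radial eq} becomes
$$v''+Av'+Bv+|v|^{p-1}v=0 \qquad\textrm{on}\ \ \mathbb{R},$$
with $A=N-2-2\beta$ and $B=\mu-\beta(N-2-\beta)$, that is, the equation from the proof of Theorem \ref{thm-morse2} with the spherical Laplacian dropped. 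Under this change of variables, a positive solution of \eqref{radial eq} on $(0,+\infty)$ corresponds to a positive solution $v$ of this autonomous equation that is defined on all of $\mathbb{R}$ and satisfies $v(t)\to0$ as $t\to-\infty$; the limits prescribed at $r=0$ in \eqref{radial eq} then follow from the decay rate of $v$ near $t=-\infty$ (the solution we produce will satisfy $\lim_{r\to0^+}r^{\nu_-}u(r)=\lambda>0$, the ``regular'' behaviour of \cite{Bae}). So everything reduces to producing such a $v$, uniformly for $(\mu,p)\in S$.

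The key structural point is that $S$ is defined precisely so that $A>0$ and $B<0$: for $0<\mu<\hmu$ this follows from $p_c(l,\mu)\le p<\frac{l+2}{\nu_-}+1$ together with the factorization $B=(\beta-\nu_-)(\beta-\nu_+)$, where $\nu_\pm=\sqrt{\hmu}\pm\sqrt{\hmu-\mu}$ are the roots of $\mu=\nu(N-2-\nu)$; for $\mu_\ast\le\mu\le0$ and $N>10+4l$ it follows from the definition of $p_\pm$, which forces $-B\ge\frac{A^2}{4(p-1)}>0$. Granting this, the planar system $v'=w$, $w'=-Aw-Bv-|v|^{p-1}v$ has the origin as a hyperbolic saddle (with eigenvalues $\beta-\nu_->0$ and $\beta-\nu_+<0$) and $P:=\bigl((-B)^{1/(p-1)},0\bigr)$ as an asymptotically stable equilibrium. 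The solution of \eqref{radial eq} we are after is the trajectory lying on the branch of the unstable manifold of the origin that enters $\{v>0\}$: by the unstable manifold theorem it exists, it is --- up to the time translation $v(t)\mapsto v(t+s)$, which mirrors the scaling invariance $u(r)\mapsto e^{\beta s}u(e^s r)$ of \eqref{radial eq} --- the unique solution with $v\to0$ as $t\to-\infty$, and near $t=-\infty$ it behaves like $\lambda\,e^{(\beta-\nu_-)t}$ with $\lambda>0$, so that $u(r)\sim\lambda\,r^{-\nu_-}$ as $r\to0^+$.

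It remains to show that this trajectory stays positive and is defined on all of $\mathbb{R}$. For this I would use the energy
$$E(v,w):=\frac12 w^2+\frac{B}{2}v^2+\frac{1}{p+1}|v|^{p+1},\qquad \frac{d}{dt}E(v,w)=-A\,w^2\le0,$$
which holds because $A>0$. Along the trajectory, $E\to E(0,0)=0$ as $t\to-\infty$, hence $E(t)\le0$ for every $t$; combined with $B<0$, this confines the trajectory to a compact set $\{0\le v\le v_0,\ |w|\le C\}$ with $v_0^{p-1}=-B(p+1)/2$, so it cannot blow up and is therefore global. Positivity is immediate: at a hypothetical zero $v(t_1)=0$ one would have $0\ge E(t_1)=\frac12 w(t_1)^2\ge0$, forcing $(v,w)(t_1)=(0,0)$ and then, by uniqueness, $v\equiv0$, which is absurd. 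Transforming back, $u(r)=r^{-\beta}v(\ln r)>0$ solves the ODE of \eqref{radial eq} on $(0,+\infty)$, and the expansion of $v$ at $t=-\infty$ supplies the boundary conditions there; a LaSalle-type argument (using $\int_{\mathbb{R}}A\,w^2\,dt<+\infty$) moreover gives $v(t)\to(-B)^{1/(p-1)}$ as $t\to+\infty$, so $u$ is asymptotic at infinity to the singular solution $(-B)^{1/(p-1)}r^{-\beta}$; since $S\subset\{p\ge p_c(l,\mu)\}$, this is consistent with Theorem \ref{thm-stable}.

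I expect the main obstacle to lie not in the dynamical argument but in the bookkeeping behind the implication ``$(\mu,p)\in S\ \Rightarrow\ A>0$ and $B<0$'': namely, identifying $S$ with the exact parameter region carved out by the curves $p=p_c(l,\mu)$, $p=\frac{l+2}{\nu_-}+1$ and $p=p_\pm(\mu)$, and checking the monotonicity and limit statements used to describe $S$. The sub-case $\mu_\ast\le\mu\le0$, $N>10+4l$ is the delicate one, since there one must reconcile the shape of the curves $p=p_\pm(\mu)$ (together with their endpoints $p_c(l,0)$, $+\infty$, $p_\ast$) with the sign conditions. A minor secondary point is invoking the unstable manifold theorem for the saddle to guarantee that the ``regular'' solution with the prescribed behaviour at $r=0$ genuinely exists, which is what makes the limits in \eqref{radial eq} meaningful.
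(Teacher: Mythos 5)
Your proposal is correct and follows essentially the same route as the paper's proof: the Emden--Fowler substitution $t=\ln r$, the planar system with the saddle at the origin and the asymptotically stable equilibrium $\bigl((L^{p-1}-\mu)^{1/(p-1)},0\bigr)$, the trajectory on the unstable manifold, and the energy $E=\frac12(w')^2-\frac12(L^{p-1}-\mu)w^2+\frac{1}{p+1}|w|^{p+1}$ with $\frac{d}{dt}E=-A(w')^2\le0$ (requiring $A>0$) to obtain positivity via the uniqueness argument at a zero and global existence via confinement to a compact set. One correction to your ``key structural point'': for $\mu_{\ast}\le\mu\le0$ the condition $p_-\le p\le p_+$ encodes $L^{p-1}-\mu\le\frac{A^2}{4(p-1)}$, i.e.\ $-B\le\frac{A^2}{4(p-1)}$ (this is exactly $h_{\mu}(m)\ge0$ in Lemma \ref{lem-set}), not $-B\ge\frac{A^2}{4(p-1)}$ as you wrote; the facts you actually need for existence, namely $B<0$ and $A>0$, follow instead from $\mu\le0<L^{p-1}$ and from $p\ge p_-\ge p_c(l,0)>\frac{N+2+2l}{N-2}$, so the slip is harmless here, but the correctly oriented inequality is precisely what is needed later for the stability argument of Proposition \ref{prop3}, and the ``bookkeeping'' you defer is carried out in the paper as Lemma \ref{lem-set} (the identity $S=\Sigma$).
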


For the proof of Proposition \ref{prop2}, as well as for other results,
we use the phase-plane analysis and need the following lemma.
For sake of convenience, we use the following notations:
\begin{equation}\label{notation}
A:=N-2-2\,\dfrac{l+2}{p-1},\qquad L^{p-1}:=\dfrac{l+2}{p-1}\left(N-2-\dfrac{l+2}{p-1}\right).
\end{equation}

An elementary calculation gives the following lemma.
\begin{lem}\label{lem-set}
We have $$S=\Sigma,$$
where
$$\Sigma:=\Big{\{}(\mu,p)\in(-\infty,\hmu)\times(1,+\infty)\ :\
p>\dfrac{N+2+2l}{N-2}\ \ \textrm{and}\ \ L^{p-1}>\mu\geq L^{p-1}-\dfrac{A^2}{4(p-1)} \Big{\}}.$$
\end{lem}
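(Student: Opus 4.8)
The plan is to reduce both sides to conditions on the pair $(\mu,s)$ where $s:=\frac{l+2}{p-1}\in(0,+\infty)$; since $p\mapsto s$ is a decreasing bijection of $(1,+\infty)$ onto $(0,+\infty)$ and, in these variables, $A=N-2-2s$, $L^{p-1}=s(N-2-s)$ and $\frac{1}{4(p-1)}=\frac{s}{4(l+2)}$, everything becomes algebraic. First I would record the two elementary equivalences $p>\frac{N+2+2l}{N-2}\iff s<\sqrt{\hmu}$ and, for $0<\mu<\hmu$, $p<\frac{l+2}{\nu_-}+1\iff s>\nu_-$. Since $\nu_\pm=\sqrt{\hmu}\pm\sqrt{\hmu-\mu}$ are exactly the two roots of $t\mapsto t(N-2-t)=\mu$ and $\nu_+>\sqrt{\hmu}$, the condition $L^{p-1}>\mu$ (that is, $s(N-2-s)>\mu$) amounts to $\nu_-<s<\nu_+$, which, once $s<\sqrt{\hmu}$ is known, collapses to $s>\nu_-$. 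This identifies the condition $L^{p-1}>\mu$ in $\Sigma$ with the upper bound $p<\frac{l+2}{\nu_-}+1$ in $S$ when $\mu>0$, and makes it automatic when $\mu\le 0$ (then $\nu_-\le 0<s$ and $s<\sqrt{\hmu}<\nu_+$).

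The core of the argument is to match the remaining condition $\mu\ge L^{p-1}-\frac{A^2}{4(p-1)}$ with the bound involving $p_c$ when $\mu>0$ and with $p_-\le p\le p_+$ when $\mu\le 0$. For $\mu>0$ I would first show the two \emph{boundary} curves coincide. Using that $p_c(l,\mu)$ solves $N=f(p,\mu)$ with $f$ as in the proof of Theorem \ref{thm-stable}, a short computation gives $N=f(p,\mu)\iff\gam_M(p,\mu)=\frac{N-2-s}{s}$; plugging in the closed form \eqref{gam max} of $\gam_M$, isolating the square root and squaring, the relation simplifies (after cancelling the positive factor $(p-1)\mu+\hmu$) to $q\mu+\hmu=s(q+1)(2\sqrt{\hmu}-s)$ with $q=p-1$, which is precisely $\mu=L^{p-1}-\frac{A^2}{4(p-1)}$ rewritten in the $(\mu,s)$–variables. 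The squaring is reversible on the relevant range because $s<\sqrt{\hmu}$ forces the isolated square root to be nonnegative. Then, as $f(\cdot,\mu)$ is strictly decreasing with $f\to+\infty$ at $p=1^+$ and $f\to 2$ at $p=+\infty$, one has $p\ge p_c(l,\mu)\iff f(p,\mu)\le N$, and threading this back through the equivalences above yields $p\ge p_c(l,\mu)\iff\mu\ge L^{p-1}-\frac{A^2}{4(p-1)}$ on the set where $s<\sqrt{\hmu}$ and $L^{p-1}>\mu$.

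For $\mu\le 0$ (with $N>10+4l$) I would instead study the cubic $\tilde g(s):=s(N-2-s)-\frac{s(N-2-2s)^2}{4(l+2)}$ on $s\in(0,\sqrt{\hmu})$: it satisfies $\tilde g(0^+)=0$, $\tilde g(s)\to-\infty$, and under $N>10+4l$ has the shape making $\{\,\tilde g\le\mu\,\}$ a (possibly empty) closed subinterval with endpoints $s(p_+)\le s(p_-)$; a direct evaluation at the critical point $s_{\ast}=\frac{N-10-4l}{6}$ (equivalently $p_{\ast}=\frac{N+2+2l}{N-10-4l}$) shows $\min\tilde g=\tilde g(s_{\ast})=\mu_{\ast}$, so this subinterval is nonempty exactly when $\mu\ge\mu_{\ast}$, recovering $p_-\le p\le p_+$. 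Finally I would check that the boundary values $p_-(0)=p_c(l,0)$, $p_+(0)=+\infty$, $p_\pm(\mu_{\ast})=p_{\ast}$ are consistent with the stated formula for $p_c(l,0)$ and with $N>10+4l$, and that $p\ge p_->\frac{N+2+2l}{N-2}$ holds automatically (using item (iv) together with $\frac{N+2+2l}{N-10-4l}>\frac{N+2+2l}{N-2}$ and the monotonicity of $p_\pm$), so the requirement $p>\frac{N+2+2l}{N-2}$ appearing in $\Sigma$ is not an additional restriction; the same is true in the case $\mu>0$ since $p_c(l,\mu)>\frac{N+2+2l}{N-2}$ there.

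I expect the main obstacle to be the square-root-clearing computation of the second paragraph — verifying that the transcendental defining relation $N=f(p,\mu)$ for $p_c(l,\mu)$ is the same curve as $\mu=L^{p-1}-\frac{A^2}{4(p-1)}$ — together with the sign/branch bookkeeping needed to be sure that the \emph{inequalities}, and not merely the boundary equalities, line up consistently across the three regimes $\mu>0$, $\mu=0$ and $\mu<0$.
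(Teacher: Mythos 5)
Your plan is essentially the paper's own proof in different clothing: the substitution $s=(l+2)/(p-1)$ is exactly the paper's $m$, your condition $\tilde g(s)\le\mu$ is the paper's cubic condition $h_\mu(m)\ge0$ with $h_\mu(m)=4m^3+4(l+4-N)m^2+(N-2)(N-10-4l)m+4\mu(l+2)$, the identification of the $\mu>0$ boundary with $p=p_c(l,\mu)$ goes through the same relation $N=f(p,\mu)\iff\gam_M(p,\mu)=(N-2-s)/s$, and the $\mu\le0$ analysis is the same critical-point computation at $s_\ast=(N-10-4l)/6$ producing $\mu_\ast$ and $p_\ast$. So the outline is correct, but two points need repair.

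First, your justification of the reversibility of the squaring is not the right one: the isolated square root is nonnegative automatically, and what actually has to be checked is that the \emph{polynomial} side is nonnegative, i.e.\ that a solution of the squared relation with $s\in(0,\sqrt{\hmu})$ lies on the branch $\gam_M$ rather than on the conjugate branch $\gam_m$ obtained with the minus sign. This is true, but needs an argument: for instance, a short computation gives $\gam_M\gam_m=1$ with $\gam_M\ge1\ge\gam_m$, while $s<\sqrt{\hmu}$ forces $(N-2-s)/s>1$, so such a value can only equal $\gam_M$; alternatively you can bypass reversibility altogether, as the paper does, by substituting $p_c$ \emph{forward} into the cubic (so $h_\mu(m_c)=0$) and invoking uniqueness of the root of $h_\mu$ in $(0,\sqrt{\hmu})$, which follows from $h_\mu(0)=4\mu(l+2)>0$, $h_\mu(\sqrt{\hmu})=4(l+2)(\mu-\hmu)<0$ and the fact that the critical points of $h_\mu$ are exactly $(N-10-4l)/6$ and $\sqrt{\hmu}$. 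The same branch issue is what makes your ``threading back'' of the inequality (not just the equality) legitimate, so it cannot be left as an acknowledged obstacle. Second, you treat $\mu\le0$ only under $N>10+4l$; since $S$ contains no points with $\mu\le0$ when $N\le10+4l$, the equality $S=\Sigma$ also requires showing that $\Sigma$ contains none either. In your framework this is immediate (then $s_\ast\le0$, so $\tilde g$ is strictly increasing, hence positive, on $(0,\sqrt{\hmu})$, and $\{\tilde g\le\mu\}$ is empty for $\mu\le0$), and the paper states this case explicitly, but your proposal omits it.
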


\begin{proof}
\noindent \emph{Step 1.} A simple calculation gives the following equivalent result:
\begin{equation}\label{step1}
p>\dfrac{N+2+2l}{N-2} \qquad \textrm{and} \qquad L^{p-1}>\mu
\end{equation}
if and only if
\begin{align*}
\left\{ \begin{array}{ll}
\frac{N+2+2l}{N-2}<p<\frac{l+2}{\nu_-}+1 \qquad &\textrm{if}\ \ 0<\mu<\hmu,\\
p>\frac{N+2+2l}{N-2} \qquad &\textrm{if}\ \ \mu\leq0.
\end{array}\right.
\end{align*}

\noindent \emph{Step 2. We claim that
$$p>\dfrac{N+2+2l}{N-2} \qquad \textrm{and} \qquad \mu\geq L^{p-1}-\dfrac{A^2}{4(p-1)}$$
if and only if
\begin{align*}
\left\{ \begin{array}{ll}
p\geq p_c(l,\mu) \qquad &\textrm{if}\ \ 0<\mu<\hmu\ \ \textrm{and}\ \ N\geq3,\\
p_-\leq p\leq p_+ \qquad &\textrm{if}\ \ \mu_{\ast}\leq\mu\leq0\ \ \textrm{and}\ \ N>10+4l.
\end{array}\right.
\end{align*}}

Let $$\Lambda:=\Big{\{}(\mu,p)\in(-\infty,\hmu)\times\Big{(}\frac{N+2+2l}{N-2},\infty\Big{)}\ :\ \mu\geq L^{p-1}-\dfrac{A^2}{4(p-1)}\Big{\}}.$$
Setting $m:=(l+2)/(p-1)$ and
defining a function $h$ on $(0,\sqrt{\hmu})\times(-\infty,\hmu)$ by
$$h_{\mu}(m)=h(m,\mu):=4m^3+4(l+4-N)m^2+(N-2)(N-10-4l)m+4\mu(l+2),$$
we observe that
$$\Lambda=\left\{m\in(0,\sqrt{\hmu})\,:\, h_{\mu}(m)\geq0\right\}.$$

Now fix $\mu\in(0,\hmu)$. Then there exists a unique $m_0\in(0,\sqrt{\hmu})$
with $m_0=(l+2)/(p_0-1)$ such that $h_{\mu}(m_0)=0$
because $h_{\mu}(0)=4\mu(l+2)>0$ and $h_{\mu}(\sqrt{\hmu})<0$.
This gives that $$\Lambda\cap\left\{(\mu,p)\,:\, 0<\mu<\hmu\right\}=\left\{m\,:\, 0<m\leq m_0\right\}.$$
Let $$m_c:=\dfrac{l+2}{p_c(l,\mu)-1}.$$
By recalling (\ref{gam max}), (\ref{fn f}) and $N=f(p_c(l,\mu),\mu)$,
we see that $h_{\mu}(m_c)=0$, which implies $m_0=m_c$
because $0<m_c<\sqrt{\hmu}$ and $m_0$ is unique on $(0,\sqrt{\hmu})$.
Hence $p_0=p_c(l,\mu)$, and thus
$$\Lambda\cap\left\{(\mu,p)\,:\, 0<\mu<\hmu\right\}=\left\{(\mu,p)\,:\,0<\mu<\hmu,\ p\geq p_c(l,\mu)\right\}.$$

On the other hand, we fix $\mu\leq0$ and two cases $N\leq10+4l$ and $N>10+4l$ are treated separately.
We note that $h_{\mu}$ has extremal values at $m=(N-10-4l)/6$, $\sqrt{\hmu}$.

When $N\leq10+4l$, we have
$h_{\mu}(m)<0$ for all $m\in(0,\sqrt{\hmu})$
since $(N-10-4l)/6\leq0$ and $h_{\mu}(0)=4\mu(l+2)\leq0$.

Consider the case $N>10+4l$, then by using the fact that $h(m,\mu)$ is strictly increasing in $\mu$,
we see that there exists a $\mu_{\ast}<0$ such that
$h_{\mu_{\ast}}(m)\leq0$ for all $m\in(0,\sqrt{\hmu})$ and
$$h_{\mu_{\ast}}\left(\dfrac{N-10-4l}{6}\right)=0
\qquad \textrm{with} \qquad m_{\ast}:=\frac{l+2}{p_{\ast}-1}=\dfrac{N-10-4l}{6},$$
which gives
$$\mu_{\ast}=-\dfrac{(N-10-4l)^2(2N+l-2)}{108(l+2)}
\qquad \textrm{and} \qquad p_{\ast}=\frac{N+2+2l}{N-10-4l}.$$
Then for any $\mu\in[\mu_{\ast},0]$,
there are $m_-=m_-(\mu)$ and $m_+=m_+(\mu)$
in $(0,\sqrt{\hmu})$
such that $m_+\leq m_-$ and $h_{\mu}(m_-)=h_{\mu}(m_+)=0$,
here we denote $m_-:=(l+2)/(p_--1)$ and $m_+:=(l+2)/(p_+-1)$.
Therefore we have that
\begin{equation*}
\begin{aligned}
\Lambda\cap\{(\mu,p)\,:\, \mu\leq0\}&=\{m\,:\,m_+\leq m\leq m_-\}\cap\{\mu\,:\,\mu_{\ast}\leq\mu\leq0\}\\
&=\{(\mu,p)\,:\,\mu_{\ast}\leq\mu\leq0,\ p_-\leq p\leq p_+\},
\end{aligned}
\end{equation*}
so that Step 2 is proved.

In addition, to figure out the graph of two functions $p=p_-(\mu)$, $p_+(\mu)$ in $(\mu,p)$-plane,
we differentiate both side of $h(m_-(\mu),\mu)=0$ with respect to $\mu\in[\mu_{\ast},0]$.
Then it follows
$$\dfrac{\partial h}{\partial \mu}+\dfrac{\partial h}{\partial m}\,
\dfrac{\partial m_-(\mu)}{\partial\mu}\Bigg{|}_{m=m_-(\mu)}=0,$$
and thus we see that $m_-(\mu)$ is strictly increasing in $\mu$
because $\partial h/\partial\mu>0$ and $\partial h/\partial m<0$.
By the same argument, we deduce that $m_+(\mu)$ is strictly decreasing in $\mu$.
These give that $p_-$ is strictly decreasing in $\mu$ and
$p_+$ is strictly increasing in $\mu$.
In particular, when $\mu=0$, we see that
$h_0(m)\geq0$ for all
$$m_+(0)=0<m\leq m_-(0)=\frac{l+2}{p_--1}=\frac{N-l-4-\sqrt{(l+2)(l+2N-2)}}{2},$$
which implies that $p_+(0)=+\infty$ and $p_-(0)=p_c(l,0)$.

By Step 1 and Step 2, we complete the proof of Lemma \ref{lem-set}.
\end{proof}

\begin{proof}[\textbf{Proof of Proposition \ref{prop2}}]
Consider the following change of variable:
\begin{equation}\label{change var}
w(t)=r^{\frac{l+2}{p-1}}u(r),\qquad t=\ln{r},
\end{equation}
then (\ref{radial eq}) is transformed into the following autonomous ODE:
\begin{equation*}
\left\{ \begin{array}{ll}
w''(t)+Aw'(t)-(L^{p-1}-\mu)w(t)+|w(t)|^{p-1}w(t)=0,\qquad -\infty<t<\infty,\\
\lim_{t\to-\infty} w(t)=0,\\
\lim_{t\to-\infty}w'(t)=0.
\end{array}\right.
\end{equation*}
Setting $w'(t)=v(t)$, we have the following system:
\begin{equation}\label{w-v eq}
\left \{ \begin{aligned}
&\ w'=v,\\
&\ v'=-Av+(L^{p-1}-\mu)w-|w|^{p-1}w.
\end{aligned}\right.
\end{equation}
Then the equilibrium solutions of (\ref{w-v eq})
are $(0,0)$ and $(\pm\,w_0,0)$ with $$w_0=(L^{p-1}-\mu)^{\frac{1}{p-1}};$$
here the latter exists if
\begin{equation}\label{mu cond1}
L^{p-1}>\mu.
\end{equation}
As $(\mu,p)\in S=\Sigma$, direct calculations show that
$(0,0)$ is a saddle point and $(w_0,0)$ is asymptotically stable.

Now, at $(0,0)$, we see that the system (\ref{w-v eq}) has an unstable curve $W^u$
leaving $(0,0)$, where $W^u$ is tangent to the line spanned by the eigenvalue $\lambda_0^+$ at $(0,0)$.
Let the corresponding solution be $$W^u(t)=(w(t), v(t))$$
and $J=(-\infty,\tau)$ be the maximal interval where $W^u(t)$ is defined.
We claim that $w$ does not change sign.
Indeed, suppose that there exists a $t_0\in J$ such that $w(t_0)=0$.
If we define
$$E_w(t):=\frac{1}{2}(w')^2-\dfrac{1}{2}\left(L^{p-1}-\mu\right)w^2+\frac{1}{p+1}|w|^{p+1},$$
then we see that $E_w(t_0)\ge0$ and
$$\dfrac{d}{dt}E_w(t)=-A(w')^2\leq0$$
because $A>0$.
Moreover, the initial conditions of \eqref{radial eq} give
$$E_w(t)\rightarrow 0 \qquad \textrm{as}\ \ t \rightarrow -\infty.$$
Hence we obtain
$$E_w(t)\equiv0 \qquad \textrm{for}\ \ t\leq t_0.$$
In particular, we have $w'(t_0)=0$ because $E_w(t_0)=0$.
Now using the uniqueness of solution for the initial value problem,
it follows that $w\equiv0$; it is a contradiction to our assumption.
Therefore, either $w>0$ or $w<0$.

We can now assume that $w(t)>0$ for all $t\in J$.
Finally we claim that $\tau=+\infty$.
In fact, by Theorem in \cite[Chapter 8.5]{HS},
it suffices to show that
there is a compact set $K$ in $\{(w,v)\,:\,w>0\}$ such that
$(w(t),v(t))\in K$ for all $t\in J$.
Using $E_w\le0$, we have $$0\le v^2\le \left(L^{p-1}-\mu\right)w^2-\frac{2}{p+1}|w|^{p+1}.$$
As $p>1$, this implies readily that $|w|$ is bounded, so is $v$.
Then it concludes $\tau=+\infty$, and thus $W^u(t)$ is defined in $(-\infty,+\infty)$.
Therefore, we get a positive radial solution of (\ref{radial eq}) in $(0,+\infty)$.
\end{proof}

Furthermore, we find a property of positive radial solutions of (\ref{radial eq}) in $(0,+\infty)$
in the following proposition:

\begin{prop}\label{prop3}
Let $u$ be any positive radial solution of \eqref{radial eq} in $(0,+\infty)$ satisfying
\begin{equation}\label{radial-condition}
\lim_{r\to 0^+}r^{\sqrt{\hmu}-\sqrt{\hmu-\mu}}\,u(r)=\lambda>0.
\end{equation}
For any $(\mu,p)\in S$, it follows that
\begin{enumerate}[{\em (a)}]
\item \label{radial-envelope}
$u(r) < U_s(r):=(L^{p-1}-\mu)^{\frac{1}{p-1}}\,r^{-\frac{l+2}{p-1}}\qquad
\textrm{for}\ \ r>0$;
\item \label{radial-stable} $u$ is a stable weak solution in $W^{1,2}_{loc}(\RN)$.
\end{enumerate}
\end{prop}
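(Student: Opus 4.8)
The plan is to exploit the phase-plane picture set up in the proof of Proposition \ref{prop2}, together with the asymptotic data \eqref{radial-condition}, to first pin down which trajectory the solution $u$ corresponds to, and then read off both the pointwise bound (a) and the stability (b) from properties of that trajectory. I would begin by performing the change of variables \eqref{change var}, $w(t)=r^{\frac{l+2}{p-1}}u(r)$, $t=\ln r$, which converts \eqref{radial eq} into the autonomous system \eqref{w-v eq} with coefficients $A$ and $L^{p-1}-\mu$ as in \eqref{notation}. The condition \eqref{radial-condition} translates into a statement about the behavior of $w(t)$ as $t\to-\infty$: since $\sqrt{\hmu}-\sqrt{\hmu-\mu}=\nu_-$ and $\tfrac{l+2}{p-1}\neq\nu_-$ in general, one should check that $r^{\frac{l+2}{p-1}}u(r)=r^{\frac{l+2}{p-1}-\nu_-}(\lambda+o(1))\to0$ (using $p<\tfrac{l+2}{\nu_-}+1$, hence $\tfrac{l+2}{p-1}>\nu_-$, which is exactly the left inequality in $\Sigma$ via Lemma \ref{lem-set}), so that $w(t)\to0$ and $w'(t)\to0$ as $t\to-\infty$. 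Thus $u$ corresponds to a trajectory leaving the saddle point $(0,0)$ along its unstable manifold $W^u$, i.e.\ the very trajectory constructed in Proposition \ref{prop2} (up to a time-shift coming from $\lambda$).

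For part (a): along $W^u$ we have the energy $E_w(t)=\tfrac12(w')^2-\tfrac12(L^{p-1}-\mu)w^2+\tfrac1{p+1}|w|^{p+1}$, which satisfies $\tfrac{d}{dt}E_w=-A(w')^2\le0$ and $E_w(t)\to0$ as $t\to-\infty$; hence $E_w(t)\le0$ for all $t$. On the branch $w>0$, $w'>0$ this forces
\[
0\le (w')^2\le (L^{p-1}-\mu)w^2-\tfrac{2}{p+1}w^{p+1},
\]
so in particular $(L^{p-1}-\mu)w^2\ge\tfrac{2}{p+1}w^{p+1}$, giving $w(t)^{p-1}\le\tfrac{p+1}{2}(L^{p-1}-\mu)<(p+1)(L^{p-1}-\mu)$; a cleaner bound $w(t)<w_0=(L^{p-1}-\mu)^{1/(p-1)}$ should follow from the fact that the trajectory stays strictly below the equilibrium level — indeed $w$ is increasing toward $w_0$ and can never reach it (since $(w_0,0)$ is an equilibrium and the trajectory is distinct from it), so $w(t)<w_0$ for all $t$. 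Translating back, $u(r)=r^{-\frac{l+2}{p-1}}w(\ln r)<(L^{p-1}-\mu)^{\frac{1}{p-1}}r^{-\frac{l+2}{p-1}}=U_s(r)$, which is (a).

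For part (b): this is where the second inequality in $\Sigma$, namely $\mu\ge L^{p-1}-\tfrac{A^2}{4(p-1)}$, enters, and I expect this to be the main obstacle. The stability inequality $Q_u(\phi)\ge0$ requires controlling $p\,r^l u^{p-1}$ against the operator $-\Delta-\mu|x|^{-2}$; by part (a) we have $p\,r^l u(r)^{p-1}<p(L^{p-1}-\mu)r^{-2}$, so it suffices to show the Hardy-type inequality
\[
\into |\nabla\phi|^2-\dfrac{\mu+p(L^{p-1}-\mu)}{|x|^2}\phi^2\,dx\ge0
\qquad\text{for all }\phi\in C^1_c(\RN),
\]
i.e.\ that $\mu+p(L^{p-1}-\mu)\le\hmu$. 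The plan is to verify algebraically that $(\mu,p)\in\Sigma$ implies exactly $p(L^{p-1}-\mu)\le\hmu-\mu$: writing $m=\tfrac{l+2}{p-1}$ so that $L^{p-1}=m(N-2-m)$ and $A=N-2-2m$, the condition $\mu\ge L^{p-1}-\tfrac{A^2}{4(p-1)}$ rearranges — using $\hmu=(N-2)^2/4$ and $\hmu-\mu\ge (N-2-m-\ldots)^2$-type completions of the square — precisely into $\hmu-\mu\ge p\,(m(N-2-m)-\mu)$. (One checks $\hmu-L^{p-1}=\big(\tfrac{N-2}{2}-m\big)^2+\big(\tfrac{N-2}{2}\big)^2-\big(\tfrac{N-2}{2}\big)^2$... more cleanly $\hmu-L^{p-1}=(\tfrac{N-2-2m}{2})^2\cdot\frac{?}{}$; the correct identity to isolate is $\hmu-\mu-p(L^{p-1}-\mu)=(p-1)\big(\mu-L^{p-1}+\tfrac{A^2}{4(p-1)}\big)$, whose nonnegativity is the defining inequality of $\Sigma$.) Once this scalar inequality is established, Hardy's inequality gives $Q_u(\phi)\ge0$ directly, so $u$ is a stable weak solution; that $u\in W^{1,2}_{loc}(\RN)$ follows from the bound $u(r)\le C r^{-\frac{l+2}{p-1}}$ in (a) together with $\tfrac{l+2}{p-1}<\tfrac{N-2}{2}$ (equivalently $p>\tfrac{N+2+2l}{N-2}$, the other defining inequality of $\Sigma$), which makes $u^2|x|^{-2}$ and $|\nabla u|^2$ locally integrable near the origin. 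The delicate point throughout is keeping track of the borderline case $\mu=L^{p-1}-\tfrac{A^2}{4(p-1)}$ (equality in the stability inequality, corresponding to $p=p_c(l,\mu)$ or $p=p_\pm$), where the above estimates are all sharp and one must argue that $u$ is still strictly below $U_s$ so that the inequality $p r^l u^{p-1}<p(L^{p-1}-\mu)r^{-2}$ remains strict.
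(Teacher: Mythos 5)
Your part (b) is essentially the paper's own argument, and the identity you guessed is correct: since $\tfrac{A^2}{4}=\hmu-L^{p-1}$, one has $\hmu-\mu-p(L^{p-1}-\mu)=(p-1)\bigl(\mu-L^{p-1}+\tfrac{A^2}{4(p-1)}\bigr)$, so $(\mu,p)\in\Sigma$ gives $\mu+p(L^{p-1}-\mu)\le\hmu$, and Hardy's inequality together with the strict bound in (a) yields $Q_u(\phi)\ge0$; the strictness of (a) is exactly what disposes of the borderline case $\mu=L^{p-1}-\tfrac{A^2}{4(p-1)}$ that you flag, as in the paper.

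The genuine gap is in your proof of (a). The energy inequality only gives $w^{p-1}\le\tfrac{p+1}{2}(L^{p-1}-\mu)$, which is too weak for (b), and your ``cleaner'' step --- that $w$ increases toward $w_0$ and can never reach it because $(w_0,0)$ is an equilibrium --- is not valid: uniqueness only forbids the trajectory from passing through the \emph{point} $(w_0,0)$, not from crossing the vertical line $w=w_0$ at a point with $w'>0$. Indeed, when $A^2<4(p-1)(L^{p-1}-\mu)$ the equilibrium $(w_0,0)$ is a focus and the unstable-manifold trajectory crosses $w=w_0$ infinitely often, i.e.\ the regular solution oscillates around $U_s$; so conclusion (a) is false without the second defining inequality of $\Sigma$, which your argument for (a) never invokes. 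The paper closes precisely this point with a phase-plane comparison taken from \cite{W}: assuming a first time $t_0$ with $w(t_0)=w_0$, one shows $w'>0$ on $(-\infty,t_0)$ via $\tfrac{d}{dt}(e^{At}w')>0$, writes the orbit as $v=v(w)$ on $(0,w_0]$, and compares it with the lines $v=c(w_0-w)$: at the first intersection point $w_c$ one gets $-c\le \tfrac{dv}{dw}(w_c)<-A+(p-1)w_0^{p-1}/c$, hence $c^2-Ac+(p-1)w_0^{p-1}>0$ for every $c>0$, forcing $A^2-4(p-1)(L^{p-1}-\mu)<0$ and contradicting $(\mu,p)\in\Sigma$. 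Without an argument of this kind your proof of (a) --- and hence of (b), which rests on it --- is incomplete. (A minor further point: for $u\in W^{1,2}_{loc}(\RN)$ you also need a gradient estimate near the origin, e.g.\ from scaling and elliptic estimates or from \eqref{radial-condition}, not just the pointwise bound of (a).)
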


\begin{rem}\em
(a) Bidaut-V\'{e}ron and V\'{e}ron (see Theorem 3.2 and Remark 3.2 in \cite{BV})
obtained that if a positive solution $u$ of (\ref{H}) in $B_R(0)\setminus\{0\}$ satisfies
$$|x|^{\frac{l+2}{p-1}}u(x)\in L^{\infty}_{loc}(B_R(0)\setminus\{0\})$$
with $p\in(1,+\infty)\setminus\{(N+2+2l)/(N-2)\}$ and $\mu<L^{p-1}$,
then it holds that either $u$ is a weak solution of (\ref{H}) in $B_R(0)$ satisfying
$$\lim_{|x|\to 0}|x|^{\sqrt{\hmu}-\sqrt{\hmu-\mu}}u(x)=\lambda>0$$
or else $$\lim_{|x|\to 0}|x|^{\frac{l+2}{p-1}}u(x)=\omega(\sigma)
\qquad \textrm{uniformly in}\ \ \sigma=\frac{x}{|x|},$$
where $\omega$ is a positive solution of
$$\Delta_{S^{N-1}}\omega-\left\{\dfrac{l+2}{p-1}\left(N-2-\dfrac{l+2}{p-1}\right)-\mu\right\}\omega+\omega^p=0
\qquad \textrm{on}\ \ S^{N-1}.$$
From this result, the condition (\ref{radial-condition}) is quite natural.
Moreover any positive radial solution satisfying (\ref{radial-condition})
becomes weak solution of (\ref{H}) in $W^{1,2}_{loc}(\RN)$. \\
(b) By a direct calculation, we easily see that $U_s$ is a singular positive
radial solution of (\ref{H}) in $\RN\setminus\{0\}$ if $\mu<L^{p-1}$.
Moreover it is stable in $\RN\setminus\{0\}$
when $(\mu,p)\in S$.
\end{rem}

\begin{proof}[\textbf{Proof of Proposition \ref{prop3}}]
Let $u$ be a positive radial solution of (\ref{radial eq})
in $(0,+\infty)$ satisfying (\ref{radial-condition}).
Using (\ref{change var}),
(\ref{radial eq}) is transformed into the following autonomous equation:
\begin{equation*}
\left\{ \begin{array}{ll}
w''(t)+Aw'(t)-(L^{p-1}-\mu)w(t)+w^p(t)=0,\qquad -\infty<t<\infty,\\
\lim_{t\to-\infty} w(t)=0,\\
\lim_{t\to-\infty}w'(t)=0.
\end{array}\right.
\end{equation*}
We first prove that the graph of $u$
does not intersect that of singular solution $U_s$.
To this end, we apply the argument used in Proposition 3.7 (iv) of \cite{W}
and state the proof here.
We shall verify that $$w(t)<(L^{p-1}-\mu)^{\frac{1}{p-1}}=w_0.$$
Suppose that we can choose $t_0=\min\{\,t\,:\,w(t)=w_0\}$.
Then we obtain that
\begin{equation}\label{exp w}
\dfrac{d}{dt}(e^{At}w')>0 \qquad \textrm{on}\ \ (-\infty, t_0)
\end{equation}
because $w''+Aw'=(L^{p-1}-\mu-w^{p-1})w>0$ on $(-\infty, t_0)$.
Now we claim that $w'>0$ on $(-\infty,t_0)$.
Indeed, we can choose an interval $I\subseteq(-\infty,t_0)$
such that $w'>0$ on $I$ (because $0=w(-\infty)<w(t_0)=w_0$).
If $I\subsetneq(-\infty,t_0)$,
then there exists a $t_{\ast}\in(-\infty,t_0)\setminus I$ such that $w'(t_{\ast})=0$.
Thus we deduce from (\ref{exp w}) that for all $t<t_{\ast}$,
$$\int^{t_{\ast}}_t\dfrac{d}{ds}\left(e^{At}w'(s)\right)\,ds
=-e^{At}w'(t)>0,$$
which implies $w'(t)<0$ for all $t\in(-\infty,t_{\ast})$.
This gives that $0=w(-\infty)>w(t)$ for all $t\in(-\infty,t_{\ast})$, a contradiction.
Therefore we conclude that $I=(-\infty,t_0)$, that is, $w'(t)>0$ on $(-\infty,t_0)$ as claimed.

Now setting $w'(t)=v(w)$, $v$ satisfies
$$\dfrac{dv}{dw}=-A+\dfrac{{w_0}^{p-1}w-w^p}{v}
\qquad \textrm{on}\ \ (0,w_0]$$
with $v(w)>0$ in $(0,w_0)$ and $v(w)\to 0$ as $w\to 0^+$.
We observe that the graph of $v=v(w)$ always intersects
all lines $v=c(w_0-w)$ with $c>0$.
For each $c>0$, let $w_c$ be the smallest point of the intersection.
Then we have
\begin{eqnarray*}
-c\ \ \leq\ \ \dfrac{dv}{dw}(w_c)
&=&-A+\dfrac{{w_0}^{p-1}w_c-{w_c}^p}{c(w_0-w_c)} \\
&=&-A+\dfrac{p{\overline{w}_c}^{p-1}-{w_0}^{p-1}}{c}
\qquad \textrm{for some}\ \ \overline{w}_c\in(w_c,w_0) \\
&<&-A+\dfrac{(p-1){w_0}^{p-1}}{c}.
\end{eqnarray*}
Hence we see that
$$c^2-Ac+(p-1){w_0}^{p-1}>0 \qquad \textrm{for all}\ \ c>0,$$
which yields $A^2-4(p-1)(L^{p-1}-\mu)<0$, a contradiction.
Thus we reach our conclusion (\ref{radial-envelope}).

Finally, we use (\ref{radial-envelope}) to prove the stability of $u$.
By a straightforward calculation, we see that
$\hmu\geq \mu+p(L^{p-1}-\mu)$ is equivalent to $$\mu\geq L^{p-1}-\dfrac{A^2}{4(p-1)}.$$
Then for any $\phi\in C^1_c(\RN)$,
\begin{equation*}
\begin{aligned}\label{singular sol-stable}
Q_u(\phi)&=\intr |\nabla\phi|^2-\left(\mu|x|^{-2}+p|x|^l|u|^{p-1}\right)\phi^2\,dx \\
&>\intr |\nabla\phi|^2-\left(\mu|x|^{-2}+p|x|^l|U_s|^{p-1}\right)\phi^2\,dx \\
&=\intr |\nabla\phi|^2-\Big{(}\mu+p(L^{p-1}-\mu)\Big{)}|x|^{-2}\phi^2\,dx \\
&\geq\intr |\nabla\phi|^2-\hmu\,|x|^{-2}\phi^2\,dx \geq 0,
\end{aligned}
\end{equation*}
where the latter inequality is obtained from Hardy's inequality.
Therefore $u$ is stable.
\end{proof}

\begin{proof}[\textbf{Proof of Theorem \ref{thm-exist}}]
When $(\mu, p)\in S$, by Propositions \ref{prop2} and \ref{prop3},
there exists a positive radial solution $u_{\lambda}$ of (\ref{radial eq}) in $(0,+\infty)$
satisfying $$\lim_{r\to 0^+}r^{\sqrt{\hmu}-\sqrt{\hmu-\mu}}u_{\lambda}(r)=\lambda>0$$
and the solution $u_{\lambda}$ is stable.
Therefore, we conclude that (\ref{H}) admits a family of stable positive radial solutions
in $W^{1,2}_{loc}(\RN)$ if $(\mu,p)\in S$.
\end{proof}
\ \\
\ \\
{\bf Acknowledgement}\\
W. Jeong was supported by Priority Research Centers
Program (No.2010-0029638) and Mid-career Researcher
Program (No.2010-0014135) through the National Research Foundation
of Korea (NRF) funded by the Ministry of Education, Science and Technology (MEST).
Y. Lee was supported by Center for Advanced Study in Theoretical Science,
National Taiwan University.
 {
The authors would like to thank their advisor J. Byeon for his valuable guide and encouragement.
The authors thank also S. Bae for giving helpful comments
and sending us his preprint \cite{Bae} including results in Section 5 and Figures 1-2.}



\end{document}